\numberwithin{equation}{section}
\theoremstyle{plain}
\newtheorem{lemma}{Lemma}[section]
\newtheorem{proposition}[lemma]{Proposition}
\newtheorem{corollary}[lemma]{Corollary}
\theoremstyle{definition}
\newtheorem{definition}[lemma]{Definition}
\newtheorem{remark}[lemma]{Remark}
\newtheorem{example}[lemma]{Example}
\begin{document}
\newcommand{\R}{{\mathbb R}}
\newcommand{\C}{{\mathbb C}}
\newcommand{\F}{{\mathbb F}}
\renewcommand{\O}{{\mathbb O}}
\newcommand{\Z}{{\mathbb Z}} 
\newcommand{\N}{{\mathbb N}}
\newcommand{\Q}{{\mathbb Q}}
\renewcommand{\H}{{\mathbb H}}

\newcommand{\Aa}{{\mathcal A}}
\newcommand{\Bb}{{\mathcal B}}
\newcommand{\Cc}{{\mathcal C}}    
\newcommand{\Dd}{{\mathcal D}}
\newcommand{\Ee}{{\mathcal E}}
\newcommand{\Ff}{{\mathcal F}}
\newcommand{\Gg}{{\mathcal G}}    
\newcommand{\Hh}{{\mathcal H}}
\newcommand{\Kk}{{\mathcal K}}
\newcommand{\Ii}{{\mathcal I}}
\newcommand{\Jj}{{\mathcal J}}
\newcommand{\Ll}{{\mathcal L}}    
\newcommand{\Mm}{{\mathcal M}}    
\newcommand{\Nn}{{\mathcal N}}
\newcommand{\Oo}{{\mathcal O}}
\newcommand{\Pp}{{\mathcal P}}
\newcommand{\Qq}{{\mathcal Q}}
\newcommand{\Rr}{{\mathcal R}}
\newcommand{\Ss}{{\mathcal S}}
\newcommand{\Tt}{{\mathcal T}}
\newcommand{\Uu}{{\mathcal U}}
\newcommand{\Vv}{{\mathcal V}}
\newcommand{\Ww}{{\mathcal W}}
\newcommand{\Xx}{{\mathcal X}}
\newcommand{\Yy}{{\mathcal Y}}
\newcommand{\Zz}{{\mathcal Z}}

\newcommand{\zt}{{\tilde z}}
\newcommand{\xt}{{\tilde x}}
\newcommand{\Ht}{\widetilde{H}}
\newcommand{\ut}{{\tilde u}}
\newcommand{\Mt}{{\widetilde M}}
\newcommand{\Llt}{{\widetilde{\mathcal L}}}
\newcommand{\yt}{{\tilde y}}
\newcommand{\vt}{{\tilde v}}
\newcommand{\Ppt}{{\widetilde{\mathcal P}}}
\newcommand{\bp }{{\bar \partial}} 

\newcommand{\ad}{{\rm ad}}
\newcommand{\Om}{{\Omega}}
\newcommand{\om}{{\omega}}
\newcommand{\eps}{{\varepsilon}}
\newcommand{\Di}{{\rm Diff}}
\newcommand{\Aff}{{\rm Aff}}

\renewcommand{\a}{{\mathfrak a}}
\renewcommand{\b}{{\mathfrak b}}
\newcommand{\e}{{\mathfrak e}}
\renewcommand{\k}{{\mathfrak k}}
\newcommand{\pg}{{\mathfrak p}}
\newcommand{\g}{{\mathfrak g}}
\newcommand{\gl}{{\mathfrak gl}}
\newcommand{\h}{{\mathfrak h}}
\renewcommand{\l}{{\mathfrak l}}
\newcommand{\n}{{\mathfrak n}}
\newcommand{\s}{{\mathfrak s}}
\renewcommand{\o}{{\mathfrak o}}
\newcommand{\so}{{\mathfrak so}}
\renewcommand{\u}{{\mathfrak u}}
\newcommand{\su}{{\mathfrak su}}
\newcommand{\ssl}{{\mathfrak sl}}
\newcommand{\ssp}{{\mathfrak sp}}
\renewcommand{\t}{{\mathfrak t }}
\newcommand{\Cinf}{C^{\infty}}
\newcommand{\la}{\langle}
\newcommand{\ra}{\rangle}
\newcommand{\half}{\scriptstyle\frac{1}{2}}
\newcommand{\p}{{\partial}}
\newcommand{\notsub}{\not\subset}
\newcommand{\iI}{{I}}               
\newcommand{\bI}{{\partial I}}      
\newcommand{\LRA}{\Longrightarrow}
\newcommand{\LLA}{\Longleftarrow}
\newcommand{\lra}{\longrightarrow}
\newcommand{\LLR}{\Longleftrightarrow}
\newcommand{\lla}{\longleftarrow}
\newcommand{\INTO}{\hookrightarrow}

\newcommand{\QED}{\hfill$\Box$\medskip}
\newcommand{\UuU}{\Upsilon _{\delta}(H_0) \times \Uu _{\delta} (J_0)}
\newcommand{\bm}{\boldmath}

\title[Constructing elusive functions]{\large Constructing elusive functions  with  help  of  evaluation mappings}
\author{H\^ong V\^an L\^e } 
\thanks{Supported by  RVO: 67985840}
\address{Institute of Mathematics of ASCR, Zitna 25, 11567 Praha,\\ email: hvle@math.cas.cz}

\medskip

\abstract{We  develop a method  to construct elusive functions  using  techniques  of commutative algebra and algebraic geometry.  The key notions   of this method  are  elusive  subsets and evaluation mappings.  We  also develop the effective elimination theory combined  with algebraic number field theory in order  to construct concrete  points   outside the image  of a polynomial mapping. Using the developed methods, for $\F = \C \text{ or } \R$,  we construct examples of    $(s,r)$-elusive  functions whose monomial coefficients are algebraic numbers,  which   give polynomials  with algebraic number coefficients of large  circuit size.} 
\endabstract
\maketitle

{\it AMSC: 03D15, 68Q17, 13P25}

\tableofcontents

\section{Introduction}

In computational  algebraic complexity theory we investigate   different complexity classes
of  sequences $(f_n)$ of  polynomials over a field $\F$. We also search 
  lower or upper bounds of complexities on a given  polynomial.   

 Two most important complexities of a multivariate polynomial $f$ are   the  circuit complexity $L(f)$  and  the formula size $L_e(f)$.
These complexities  measure  the  minimal size of  certain  arithmetic circuits computing  $f$. Arithmetic circuits are the standard computational model for computing polynomials.    {\it An arithmetic circuit}, as defined, e.g., in \cite[\S 1.1]{Raz2009}, is a finite  directed acyclic graph whose  nodes are divided into four types:    nodes of in-degree 0 (input gates) labeled with  an input variable or  the field element 1, nodes labelled with +  (sum gates),  node labeled with $\times$ (product gates), and  nodes of out-degree 0 (output gates) giving the result of the computation. Every edge $(u,v)$ in the graph is labeled with a field element $\alpha$. It computes  the product of $\alpha$ with the polynomial computed by $u$. A product gate (resp. a sum gate) computes the product (resp. the sum) of  polynomials computed by the
  edges  that reach it.  We say that a polynomial  $f \in \F [X_1, \cdots, X_n]$ is computed by a circuit if it is computed by one of the circuit output gates. If a  circuit has $m$ output gates, then it computes a $m$-tuple of polynomials
$f^i \in \F [X_1, \cdots, X_n], \, i \in[1,m]$. In what follows we consider only ordered $m$-tuples of polynomials
resulting from  a  numeration  of the output gates  of an arithmetic circuit; so  an $m$-tuple is understood as an ordered
$m$-tuple. Further, assuming in this note  that $\F$ is a field of characteristic 0, we also identify an $m$-tuple  of polynomials in $n$ variables  with a polynomial mapping  from $\F^n$ to $\F^m$.
Let us denote by $Pol^r (\F^n , \F^m )$  the space of all polynomial mapping of degree at most $r$ from $\F^n$ to $\F^m$  and set $Pol (\F^n, \F^m): = \cup_{r = 0} ^\infty Pol ^r (\F ^n, \F ^m)$.  

 We define {\it the size  of a circuit} as the number of its  edges, and {\it the  circuit complexity} $L(f)$ of a polynomial  mapping $f$ 
to be  the minimum size of an arithmetic circuit computing  $f$ \cite{Raz2009}.  {\it The formula size $L_e(f)$ of a polynomial mapping $f$}   is  defined as the minimum size of an arithmetic circuit computing $f$,  which
is a directed tree, i.e.,  all vertices  have out-degree at most 1.

The  formula size and the circuit complexity of polynomial mappings do not have  clear geometric or algebraic structure. In \cite{Valiant1979} Valiant   suggested to  ``approximate" the formula size  of a polynomial by the determinantal complexity, observing that on the one hand, the determinantal complexity is a lower bound for  the formula size, and on the other  hand, the determinantal complexity has a clear algebraic and geometric interpretation. Geometric and algebraic properties of the determinantal complexity  of a polynomial  have been employed by Mignon-Ressayre \cite{M-R2004}  and   by Mulmuley-Sohoni
\cite{M-S2001} to study  lower bounds on the determinantal complexity, and to attack the problem $VP$ versus $VNP$. 
  
In \cite{Raz2009} Raz proposed a geometric approach to obtain a  lower bound on the   circuit complexity  of a  polynomial by
introducing a polynomial mapping associated with a universal graph of a given arithmetic circuit.  Using his method  Raz  has constructed   explicit  polynomials  whose  constant  depth circuit  size is large \cite[Lemma 4.1]{Raz2009}, see also
Remark \ref{r2}.

Raz's method  of constructing  elusive  functions is  combinatorial, and it is not  clear how  to  apply his method to find  other examples  of elusive  functions.   In this paper  we develop  an algebraic-geometric  method    for  construction of  elusive functions.
The key notion  of this method   are elusive subsets and evaluation mappings.

The structure of our paper is as follows.    In section  2 we recall the notion of a $(s,r)$-elusive function introduced by Raz in \cite{Raz2009}.  To study $(s,r)$-elusive functions we introduce the notion  of a $(s,r)$-elusive  subset (Definition \ref{pelu}) and we characterize  polynomial mappings whose image contains an $(s,r)$-elusive subset consisting  of $k$ points (Corollary \ref{rske}). This construction leads to the notions of a $(s,r,k)$-elusive function and of  a strong $(s,r)$-elusive  function (Definitions \ref{pelu}, \ref{strel}).  We compare these notions, using an interpolation formula for polynomial mappings (Proposition \ref{Lemma 4.4}, Remark \ref{elstr}). In section 3 we  develop the method invented by Kumar-Lokam-Patankar-Sarma \cite{KLPS2010}  that uses  the effective elimination theory   combined with  algebraic number field theory in order to find  concrete points  $b$  which lie outside  the image of a polynomial mapping $g$, if $g$ is defined over $\Q$, such that the coordinates of $b$ are algebraic numbers
(Proposition \ref{glok}).  Note that our method is close  to    the Strassen-Schnorr-Heintz-Sieveking  method  of constructing polynomials  with algebraic  coefficients which are hard to compute, but our method and their method  yield  different  polynomials which are hard to compute in  different  complexity classes (Remark \ref{polq}.1).
In section 4 we  
construct  
examples of $(s,r)$-elusive functions (Proposition \ref{pola}).  Using this, for $\F =\R$ or $\F =\C$,  we construct explicit  examples of  sequences   of polynomials $f_n:\F^{2n} \to \F^n$ of  degree $5r+1$ whose coefficients are algebraic numbers such that any depth $r$ arithmetic circuit  for  $f_n$  is of size  greater than $ n^2/50r^2$ (Proposition \ref{c45}).  We compare  our results with  previously obtained   results (Remark \ref{r2}). 
We  also  construct  $(s,r)$-elusive  functions whose monomial coefficients  are  algebraic numbers, which give    polynomials  of large  circuit size (Proposition \ref{super}, Corollary \ref{cor:circs}).
  
Finally we note that  our results in  effective elimination theory are applicable for similar complexities of the same nature, e.g.  the determinantal complexity, the rank of tensors and the rigidity of matrices.

\section{Elusive functions  and  associated polynomial mappings}
In this section we recall the notion of a $(s,r)$-elusive function introduced by Raz in \cite{Raz2009} for
constructing   sequences of multivariate polynomials  of high  circuit complexity (Definition \ref{elus}).  To study $(s,r)$-elusive functions we introduce the notion  of a $(s,r)$-elusive  subset (Definition \ref{pelu}) and we find  a condition for a polynomial mapping whose image contains a $(s,r)$-elusive subset  (Corollary \ref{rske}). We also introduce the notion of a $(s,r,k)$-elusive function (Definition \ref{pelu}) and the notion 
of a strongly $(s,r)$-elusive  functions (Definition \ref{strel}). 
We compare  $(s,r)$-elusive functions  with $(s,r,k)$-elusive functions and strongly $(s,r)$-elusive functions, using an interpolation formula for polynomial mappings over $\F$ and  an evaluation mapping (Proposition \ref{Lemma 4.4}, Remark \ref{elstr}).  

\begin{definition}[\cite{Raz2009}, p. 2]\label{elus} A polynomial mapping $f: \F ^ n \to \F  ^ m$ is  called {\it $(s, r)$-elusive},  if for every polynomial mapping $\Gamma : \F ^ s \to \F ^ m$ of degree $r$, we have $f(\F ^ n) \not\subset
\Gamma(\F ^ s)$.
\end{definition}

Using the existence of elusive  functions Raz has constructed  polynomials of large circuit size \cite[\S  3.4]{Raz2009}. Raz's construction of elusive functions is based on a certain combinatoric property
of the coefficients of a special polynomial mapping \cite[Lemma 4.1]{Raz2009}.   Our approach to elusive functions is based on the concept of an $(s,r)$-elusive subset.

\begin{definition} \label{pelu} A  $k$-tuple $S_k$ of $k$  points in $\F ^m$  is called {\it $(s,r)$-elusive},  if for every polynomial mapping $\Gamma : \F ^ s \to \F ^ m$ of degree $r$, we have $S_k \not\subset\Gamma(\F ^ s)$.
A polynomial  mapping $f: \F^n \to \F^m$ is called {\it  $(s,r, k)$-elusive}, if  there is a $k$-tuple  of points in the image $f(\F^n)$ which is $(s,r)$-elusive. 
\end{definition}

Clearly any  $(s,r, k)$-elusive  function is $(s,r)$-elusive.  

\begin{example}\label{mom}(cf. \cite{Raz2009}) A polynomial mapping $f: \F ^ n \to \F ^ m$ is $( m-1,1)$-elusive,
if and only if  the image $f(\F^n)$ does not belong to any hyperplane in the affine space $\F ^ m$. Equivalently, a $(m-1, 1)$-elusive polynomial is $(m-1, 1, m +1)$-elusive.
For example,  the moment curve $f : \C \to \C ^m, \, t \mapsto ( t, t ^2, \cdots , t ^m)$ is $( m-1,1)$-elusive, since the image of the moment curve
contains $m+1$ points $b_0 : = f(0) = 0, \cdots,  b_i : = f(a_i)\in \C^m, \, 1\le i \le m,$ satisfying the following condition. The  values $a_i\in \F^n$ are chosen to be distinct  such that $b_1, \cdots, b_m$ are  
 linear independent vectors in $\C^n$. Clearly the $(m+1)$-tuple $(0, b_1, \cdots, b_m)$ is   $(m-1, 1)$-elusive, which implies that $f$ is $(m-1, 1, m+1)$-elusive, see Corollary \ref{colin} for a detailed explanation.
\end{example}

To treat   $(s,r)$-elusive $k$-tuples  we consider the following evaluation map
\begin{eqnarray}
 Ev^k_{r,s, m}: Pol ^r (\F ^{s}, \F ^m )\times  (\F ^ s)^k  \to (\F ^m ) ^k,\label{evkrsm}\\
 (f_1, \cdots , f_m) ( a_1, \cdots, a_{k}) \mapsto (  f_1 ( a_1), \cdots,
f_m ( a_{k})),\nonumber
\end{eqnarray}
where $f_j\in Pol ^r(\F ^s)$ for  $ 1\le j \le m$ and $ a_i  \in \F ^s$ for $1\le i \le k$.

We identify  a k-tuple $S_k = (b_1, \cdots, b_k)$, $b _i \in \F^m$, with the point $\overline{S_k} \in (\F ^m)^k$ whose coordinate $\overline{S_k}^{i,j}$, $1\le i \le k, \, 1\le j \le m,$ is  equal to the $i$-th coordinate $b_j ^i$ of $b_j \in \F^m$. 

\begin{lemma}\label{ev} A $k$-tuple $S_k\subset \F^m$ is $(s,r)$-elusive, if and only if $\overline {S_k}$ does not belong to the image of $Ev ^k _{r, s,m}$.
\end{lemma}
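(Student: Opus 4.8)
The plan is to prove the two implications of the ``if and only if'' separately, each being a direct translation of the relevant definition; equivalently, I will establish the contrapositive equivalence that $S_k$ \emph{fails} to be $(s,r)$-elusive precisely when $\overline{S_k}$ \emph{does} lie in the image of $Ev^k_{r,s,m}$. The first step is to spell out what membership in that image means: by (\ref{evkrsm}), a point of $(\F^m)^k$ lies in $\mathrm{Im}(Ev^k_{r,s,m})$ if and only if it has the form $(F(a_1),\dots,F(a_k))$ for some polynomial mapping $F=(f_1,\dots,f_m)\in Pol^r(\F^s,\F^m)$ and some $(a_1,\dots,a_k)\in(\F^s)^k$, where $F(a_i)=(f_1(a_i),\dots,f_m(a_i))\in\F^m$. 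Under the identification $S_k=(b_1,\dots,b_k)\leftrightarrow\overline{S_k}$, this says: $\overline{S_k}\in\mathrm{Im}(Ev^k_{r,s,m})$ iff there exist $F\in Pol^r(\F^s,\F^m)$ and $a_1,\dots,a_k\in\F^s$ with $F(a_i)=b_i$ for all $i$.

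For the direction ``$\overline{S_k}\in\mathrm{Im}(Ev^k_{r,s,m})\Rightarrow S_k$ not $(s,r)$-elusive'' I would take such an $F$ and such $a_1,\dots,a_k$, observe that $b_i=F(a_i)\in F(\F^s)$ for every $i$, hence $S_k\subset F(\F^s)$ with $\deg F\le r$, and conclude by Definition \ref{pelu} that $S_k$ is not $(s,r)$-elusive. For the converse I would start from a polynomial mapping $\Gamma\colon\F^s\to\F^m$ of degree $\le r$ with $S_k\subset\Gamma(\F^s)$, then \emph{choose}, for each $i\in\{1,\dots,k\}$, a preimage $a_i\in\F^s$ with $\Gamma(a_i)=b_i$; assembling these into $(a_1,\dots,a_k)\in(\F^s)^k$ gives $Ev^k_{r,s,m}(\Gamma,(a_1,\dots,a_k))=(\Gamma(a_1),\dots,\Gamma(a_k))=(b_1,\dots,b_k)=\overline{S_k}$, so $\overline{S_k}\in\mathrm{Im}(Ev^k_{r,s,m})$. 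Taking contrapositives of these two implications yields the lemma.

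The only point that needs a moment's care --- and it is the ``main obstacle'' only in the loosest sense --- is that the condition $S_k\subset\Gamma(\F^s)$ allows a \emph{different} preimage $a_i$ for each point $b_i$ rather than forcing a single preimage to serve all of them; this is exactly the freedom encoded in the $(\F^s)^k$ factor in the domain of $Ev^k_{r,s,m}$, so the translation goes through without friction. No quantitative estimates or algebraic geometry are needed: the statement is simply a reformulation of the definition of an $(s,r)$-elusive $k$-tuple in terms of the evaluation map, which is precisely what makes it useful for the constructions in the sequel.
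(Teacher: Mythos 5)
Your proposal is correct and follows essentially the same route as the paper's proof: both directions are handled by unwinding the definition of $Ev^k_{r,s,m}$, identifying membership of $\overline{S_k}$ in the image with the existence of a single $F\in Pol^r(\F^s,\F^m)$ and points $a_1,\dots,a_k\in\F^s$ satisfying $F(a_i)=b_i$, and then passing to contrapositives. (Incidentally, your careful remark about each $b_i$ being allowed its own preimage is implicitly what the $(\F^s)^k$ factor provides, and your phrasing of the converse direction is cleaner than the paper's, which contains a small typo in its final sentence.)
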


\begin{proof}  Assume that $\overline{S_k}$ belongs to the image of  $Ev ^k _{ s,r,m}$. Then there are a polynomial mapping $f \in Pol^r (\F ^s, \F ^{m})$ and   a point $a \in \F ^{sk} $ such that
\begin{equation}
Ev ^k _{r,s,m} (f, a) =  \overline{S_k}.\label{eval1}
\end{equation}
We write $S_k= (b_1, \cdots, b_{k}), \, b_i\in \F^m$, and  $ a = (a_1, \cdots, a_{k}), \,  a_i \in \F^s$.  The equation (\ref{eval1}) implies   
\begin{equation}
f (a_i) = b_i.\label{eval2}
\end{equation}
Thus  $S_k\subset f (\F ^s)$. This proves the ``only if" assertion of Lemma \ref{ev}.

Conversely, assume that $S_k \subset f (\F^s)$ for  some $f \in Pol^r (\F ^s, \F ^m)$. Then there are points $a_i \in \F^s$, $i = \overline{1,k}$, such that  (\ref{eval2}) holds for all $i$. Since (\ref{eval2}) is  equivalent to
(\ref{eval1}), it follows  that $\overline{S_k}$  belongs to the image of $Ev ^k_{r, s,m}$. This  completes the proof of Lemma \ref{ev}.
\end{proof}

\begin{corollary}\label{rske} A polynomial map $f:\F ^n \to \F ^m$ is $(s,r, k)$-elusive, if and only if  the subset $\hat f  ^k : = f (\F^n) \times \cdots _{k\, times} \times f (\F^n) \subset  \F^{mk}$ does not belong to
the image  of   the evaluation mapping $Ev ^k_{s,r,m}$. 
\end{corollary}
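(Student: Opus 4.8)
The plan is to reduce Corollary \ref{rske} to Lemma \ref{ev} by unwinding the definition of $(s,r,k)$-elusiveness. By Definition \ref{pelu}, $f$ is $(s,r,k)$-elusive precisely when there exists a $k$-tuple $S_k = (b_1,\dots,b_k)$ with each $b_i \in f(\F^n)$ such that $S_k$ is $(s,r)$-elusive. So $f$ fails to be $(s,r,k)$-elusive exactly when every $k$-tuple of points drawn from $f(\F^n)$ is \emph{not} $(s,r)$-elusive, i.e., is contained in the image of some degree-$r$ polynomial mapping $\F^s \to \F^m$.

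First I would rephrase the statement in contrapositive form: $f$ is \emph{not} $(s,r,k)$-elusive if and only if $\hat f^k \subset \mathrm{Im}(Ev^k_{s,r,m})$. The key observation is the bookkeeping identification: a $k$-tuple $S_k$ of points in $f(\F^n)$ corresponds, via the map $S_k \mapsto \overline{S_k}$, to a point of $\hat f^k = f(\F^n)^{\times k} \subset \F^{mk}$, and conversely every point of $\hat f^k$ arises this way. Under this identification, "$S_k$ lies in $f(\F^n)$ coordinatewise" is literally the condition "$\overline{S_k} \in \hat f^k$." Then I would apply Lemma \ref{ev} pointwise: for a given $k$-tuple $S_k \subset \F^m$, being $(s,r)$-elusive is equivalent to $\overline{S_k} \notin \mathrm{Im}(Ev^k_{r,s,m})$. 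Hence $f$ is $(s,r,k)$-elusive $\iff$ there exists $\overline{S_k} \in \hat f^k$ with $\overline{S_k} \notin \mathrm{Im}(Ev^k_{r,s,m})$ $\iff$ $\hat f^k \not\subset \mathrm{Im}(Ev^k_{r,s,m})$, which is the claim.

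The argument is essentially a chain of logical equivalences, so the only thing requiring care is the indexing and the compatibility of the various identifications — in particular, checking that the coordinate convention used to define $\overline{S_k}$ (where $\overline{S_k}^{i,j} = b_j^i$) matches the target $(\F^m)^k$ of $Ev^k_{r,s,m}$ and that "$b_i \in f(\F^n)$ for all $i$" is genuinely equivalent to "$\overline{S_k} \in f(\F^n)^{\times k}$" once the coordinates are sorted out. I expect this to be the main (and only real) obstacle: making sure the two product structures $(\F^m)^k$ and $\F^{mk}$ are identified consistently with the one used in Lemma \ref{ev}, so that no ordering mismatch creeps in. Once that is fixed, the corollary follows immediately by applying Lemma \ref{ev} to each candidate $k$-tuple and quantifying over the points of $f(\F^n)$.
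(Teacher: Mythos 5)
Your proof is correct and follows exactly the route the paper implicitly intends: the corollary is an immediate unwinding of Definition \ref{pelu} together with Lemma \ref{ev}, with the only content being the identification between $k$-tuples drawn from $f(\F^n)$ and points of $\hat f^k$, and the reading of ``$\hat f^k$ does not belong to the image'' as ``$\hat f^k \not\subset \mathrm{Im}(Ev^k_{s,r,m})$.'' The paper gives no separate proof for this corollary, so your chain of equivalences is precisely the argument it leaves to the reader.
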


Now we are going to find a sufficient condition for  a polynomial mapping $f: \F ^n \to \F  ^m$  to be 
$(s,r, k)$-elusive using an interpolation formula for a polynomial mapping. 

Interpolation  of a function  in many variables by a polynomial mapping  has been investigated  for a long time, but there are many interesting and unsolved questions   \cite{GS2000}. One of the main differences between interpolation of a function in one variable and interpolation of
a function in many variables is that in the former case  an interpolable set, i.e., the set at which the value of an interpolating polynomial function (resp. a polynomial mapping) must coincide with the value of a given  interpolable function,  can be arbitrary, but  in the later case cannot be arbitrary.
The interpolation formula  given below is likely unknown, though possibly,  there are some similar formulas.  Our interpolable set is  a lattice  in  a simplex in $\F ^n$.

Note that a monomial $X_1^{i_1} \cdots X_s ^{i_s}  \in Pol ^r( \F  ^s)$ can be identified with an ordered $s$-tuple $(i_1, \cdots, i_s)$ of non-negative integers $i_j$, $1\le j \le s$, such that $i_1 + \cdots + i_s  \le r$.  The following formula is well-known 

\begin{equation}
\dim Pol ^r (\F ^s) = \binom{s+r}{s}.\label{combi1}
\end{equation} 

By (\ref{combi1}) there exists a  1-1  mapping 
 $H^r_s$ from the set $Mon_s^r$ of all monomials $X_1 ^{i_1}\dots X_s^{i_s}\in Pol ^r (\F ^s)$ to the set $S_{s,r}$ of  $\binom{s+r}{r}$ points $(i_1, \cdots,  i_s) \in \F ^s$.   (The  mapping  $H^r_s$  induces a linear isomorphism $H^r_{s, m}: Pol ^r (\F^s, \F^m) \to (\F^s)^m,\,  k = m\binom{s+r}{r}$.) 
For a  set $S_{s,r,m}$ of  $\binom{s+r}{r}$ points
in $\F ^ m$ we enumerate the points in $S_{s,r,m}$ by $b_{i_1, \cdots, i_s}$,  where $i_s \in \N$ and $\sum_s i_s \le r$.

Now we are ready to prove

\begin{proposition}\label{Lemma 4.4} Given a tuple  $S_{s,r,m}$ of $\mathrm{\binom{s+r}{r}}$  points  $b_{i_1, \cdots , i_s}$ in $\F ^m$, $i_j \in \N$ and $\sum_{j=1}^s i_j \le r$, there exists
an algorithmically constructed  polynomial  mapping  $ f_{S_{s,r,m}} : \F ^s \to \F ^m$ of degree $r$ 
such that
\begin{equation}
f_{S_{s,r,m}} (i_1, \cdots, i_s) = b_{i_1, \cdots , i_s}, \label{interp1a}
\end{equation}
for all $(i_1, \cdots, i_s) \in \N ^s\subset \F^s$  satisfying $\sum_{j=1} ^s i_j \le r$.
\end{proposition}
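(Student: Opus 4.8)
The plan is to reduce the multivariate interpolation problem on the simplex lattice to a tensor product of one-variable Lagrange-type interpolations, exploiting the fact that the interpolation nodes $(i_1,\dots,i_s)\in\N^s$ with $\sum_j i_j\le r$ form a ``staircase'' set that is downward closed. First I would treat the scalar case $m=1$, since a polynomial mapping $f_{S_{s,r,m}}$ into $\F^m$ is just an $m$-tuple of scalar-valued polynomials, one for each coordinate $b_{i_1,\dots,i_s}^\ell$, $1\le\ell\le m$, and the construction for each coordinate is identical and algorithmic. So it suffices to build, for any prescribed values $c_{i_1,\dots,i_s}\in\F$ indexed by the lattice points of the simplex, a polynomial $p\in Pol^r(\F^s)$ with $p(i_1,\dots,i_s)=c_{i_1,\dots,i_s}$.

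For the scalar construction I would use a Newton-forward-difference style basis adapted to the simplex. Define the falling-factorial polynomials $\binom{X_j}{k}=\frac{X_j(X_j-1)\cdots(X_j-k+1)}{k!}$, which have degree $k$ and vanish at $X_j=0,1,\dots,k-1$ while equalling $1$ at $X_j=k$. The products $\prod_{j=1}^s\binom{X_j}{k_j}$ over all $(k_1,\dots,k_s)$ with $\sum_j k_j\le r$ have total degree $\le r$, there are exactly $\binom{s+r}{r}$ of them, and they are triangular with respect to the partial order on lattice points: evaluating $\prod_j\binom{X_j}{k_j}$ at the lattice point $(i_1,\dots,i_s)$ gives $\prod_j\binom{i_j}{k_j}$, which is zero unless $k_j\le i_j$ for all $j$ and is $1$ when $(k_1,\dots,k_s)=(i_1,\dots,i_s)$. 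Hence the evaluation matrix of this basis against the simplex lattice, suitably ordered by $\sum_j i_j$ and then lexicographically, is lower-triangular with ones on the diagonal, so it is invertible over any field of characteristic $0$. The coefficients $\lambda_{k_1,\dots,k_s}$ in $p=\sum\lambda_{k_1,\dots,k_s}\prod_j\binom{X_j}{k_j}$ are then determined recursively by forward substitution, giving the explicit, algorithmic formula $\lambda_{k_1,\dots,k_s}=c_{k_1,\dots,k_s}-\sum_{(l_1,\dots,l_s)<(k_1,\dots,k_s)}\lambda_{l_1,\dots,l_s}\prod_j\binom{k_j}{l_j}$, which is precisely the multivariate finite-difference operator applied to $c$.

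The one point that needs care — and is the only real obstacle — is verifying that the nodes really do form a downward-closed set compatible with this triangular basis: I must check that whenever $\sum_j k_j\le r$ appears as a basis index it is also a legitimate lattice point, and whenever $(l_1,\dots,l_s)\le(k_1,\dots,k_s)$ componentwise with $\sum k_j\le r$ we also have $\sum l_j\le r$, so the recursion stays inside the simplex; both are immediate. I would then assemble $f_{S_{s,r,m}}=(p^1,\dots,p^m)$ where $p^\ell$ interpolates the $\ell$-th coordinates $b_{i_1,\dots,i_s}^\ell$, note that its degree is at most $r$ by construction, and observe that the whole procedure — setting up the lower-triangular system and solving it by forward substitution — is an explicit algorithm, which establishes the ``algorithmically constructed'' claim in the statement. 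This proves (\ref{interp1a}) for all lattice points $(i_1,\dots,i_s)$ with $\sum_{j=1}^s i_j\le r$, completing the proof of Proposition \ref{Lemma 4.4}.
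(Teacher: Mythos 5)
Your proof is correct and constructs essentially the same interpolating polynomial as the paper does, but it organizes the argument differently. The paper reduces to $m=1$ and then runs an induction on the number of variables $s$: the base case $s=1$ is the classical Newton forward-difference formula (\ref{4.1.1}), and the inductive step (\ref{tsr1}) peels off the last variable $X_{s_0}$ via the ansatz $f = P^r + X_{s_0}P^{r-1} + \cdots + X_{s_0}(X_{s_0}-1)\cdots(X_{s_0}-r+1)P^0$, substituting $X_{s_0}=0,1,\dots,r$ and recursively determining the $P^{r-k}$'s. Unwound, this yields precisely your tensor-product falling-factorial basis $\prod_j\binom{X_j}{k_j}$ (up to normalization by $\prod_j k_j!$) and the same forward-substitution recurrence. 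What you do differently is skip the induction on $s$ and instead observe directly that the evaluation matrix of this basis at the simplex lattice points is unitriangular under a total-degree-refining order — the key point being that $\prod_j\binom{i_j}{k_j}\neq 0$ forces $k_j\le i_j$ componentwise, hence $\sum k_j\le\sum i_j$ with equality iff $(k_1,\dots,k_s)=(i_1,\dots,i_s)$. This global linear-algebra viewpoint makes invertibility and algorithmicity transparent in one stroke and avoids the paper's nested recursion, at the price of being slightly less explicit about the step-by-step computation of coefficients. Both arguments are valid; yours is cleaner to state, the paper's is closer to the traditional presentation of Newton interpolation and more directly suggests the recursive implementation.
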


\begin{proof} Let $f^i$  (resp. $b ^i$) denote the i-th  coordinate of a polynomial mapping $f: \F^s \to \F^m$ (resp.  of a point $b \in \F^m)$, i.e., $f = (f^1, \cdots, f^m)$.  Note that (\ref{interp1a}) is equivalent  to the following system of  equations
\begin{equation}
f_{S_{s,r,m}} ^i  (i_1, \cdots, i_s) = b_{i_1, \cdots , i_s} ^i ,\text { for } i \in [1,m]\label{interp1i}
\end{equation}
and for all $(i_1, \cdots, i_s) \in \N ^s\subset \F^s$  satisfying $\sum_s i_s \le r$.
Since the system (\ref{interp1i})  consists of independent  subsystems each of which corresponds to  an upper index $i \in [1,m ]$, it  suffices  to prove  Proposition
\ref{Lemma 4.4}  for the case $m =1$.

 We construct
$f_{S_{s,r,1}}$ by induction  on $s$. Note that the case $s =1$ is well-known.  Given   an $(r+1)$-tuple $( b_0, \cdots, b_r)$ of elements  $b_i \in \F $, there is a polynomial   $f_{S_{1,r,1}} \in \F[X]$   taking values  in $( b_0, \cdots, b_r)$. 
 The Newton  interpolation formula  defines $f_{S_{s,r,1}}$ by the following formula  
\begin{equation}f_{S_{1,r,1}} (X) := \lambda_0 + \lambda_1 X + \lambda_2 X (X  - 1) + \cdots + \lambda_r X( X-1) \cdots (X-r),
\label{4.1.1}
\end{equation}
where the coefficients $\lambda_k\in \F$ are defined inductively on $k$ by solving the system of the following  linear equations  with coefficients in $\N$
$$\lambda_0 = b _0,$$
$$ \lambda_0+ \lambda_ 1 = b_1, $$ 
$$ \cdots $$
\begin{equation}
 \lambda_0 + \lambda_1\cdot k    + \cdots + \lambda_k \cdot k !  = b_k , \label{4.1.1.a}
 \end{equation}
etc.
\medskip

Next, let us assume that $s_0 \ge 2$  and Proposition \ref{Lemma 4.4} is valid for $s\le s_0-1$. Now we show how to construct  the required polynomial $f_{S_{s_0, r,1}}$.
Recall that $ f_{S_{s_0,r,1}}:\F ^ {s_0} \to \F $    is    required to satisfy the following
equation
\begin{eqnarray}
f_{S_{s_0,r,1}} (i_1, i_2, \cdots,  i_{s_0}) = b_{i_1,\cdots i_{s_0}}\in S_{s_0, r,1}\subset \F\label{tsr}\\
 \text { for all } (i_1, \cdots, i_{s_0}) \text { such that } X_1^{i_1} \cdots X_s^{ i_{s_0}} \in Mon _{s_0}^ r.
\nonumber
\end{eqnarray}
We  set
\begin{eqnarray}
f_{S_{s_0,r, 1}}(X_1, \cdots, X_{s_0}) := P ^{  r} ( X_{1}, \cdots, X_{s_0-1}) + X_{s_0} P ^{r-1} ( X_{1}, \cdots , X_{s_0-1})  + \cdots \nonumber\\
+ X_{s_0} ( X_{s_0} -1) \cdots ( X_{s_0} - r+ 1) P ^{0} ( X_{1}, \cdots,  X_{s_0-1}).\label{tsr1}
\end{eqnarray}

To  determine the polynomials $ P^{k} (X_1, \cdots, X_{s_0-1})$ entered in (\ref{tsr1})  for $ 0\le k \le r$  we exploit the following canonical injective map
\begin{equation}
Mon_{s-1}^{ r} \to Mon_s ^r, \, X_1 ^{i_1} \cdots X_{s-1} ^{i_{s-1}} \mapsto X_1 ^{i_1} \cdots X_{s-1} ^{i_{s-1}},\label{incl1}
\end{equation}
as well as  the following canonical inclusions
\begin{equation}
 Mon_{s-1}^ r \supset Mon_{s-1}^{ r-1} \supset Mon_{s-1}^{ r-2} \supset Mon_{s-1}^{ r-3} \supset \cdots. \label{incl2} 
\end{equation}
Using (\ref{incl1}) and (\ref{incl2}) we denote  the restriction of $H^r_s
$ to $Mon_{s_0-1}^ {r-k}$ by
$H_{s_0-1}^{ r-k}$. The image $H_{s_0 -1} ^{r-k} (Mon _{s_0-1} ^{r-k})$ is a set $S_{s_0-1, r-k}$ of $\binom{s_0-1 +r-k}{r-k}$ elements in $\F ^{s_0-1} \subset \F^{s_0}$.
Clearly,  for $0\le k \le r$
$$S_{s_0-1, r -k} = \{  (i_0, \cdots, i_{s_0-1}, k)| \, i_j \in \N  \text { and  }\sum_{j=1} ^ {s-)} i_j  \le r-k\} \subset S_{s_0, r}.$$ 

Next we decompose 
$$S_{s_0, r,1} :  = \{ b _{i_0, i_1, \cdots, i_{s_0}}| \, i_j \in \N  \text { and  }\sum_{j=1} ^ {s_0} i_j  \le r\}\subset \F$$
 as a union of its disjoint subsets
\begin{equation}
S_{s_0, r,1} = S_{s_0-1, r,1} \cup S_{s_0-1, r-1, 1} \cup \cdots \cup S_{s_0-1, 0,1}, \nonumber 
\end{equation}
where for $0\le k \le r$
$$S_{s_0-1, r -k, 1} := \{  b_{i_0, \cdots, i_{s_0-1}, k}| \, i_j \in \N  \text { and  }\sum_{j=1} ^ s i_j  \le r-k\} \subset S_{s_0, r, 1}.$$ 

Substituting $X_{s_0} = 0$ into (\ref{tsr1}), taking into account (\ref{tsr}), we observe  that  the polynomial $P ^ r (X_1, \cdots , X_{s_0-1})$   
satisfies the following  equation
\begin{eqnarray}
P^ r ( i_1, \cdots, i_{s_0-1}) = b_{i_0, \cdots, i_{s_0-1}, 0}\in S_{s_0-1, r,1} \subset \F \label{tsr2}\\
\text{ for all  } (i_1, \cdots, i_{s_0}) \text { such that }(X_1^{i_1} \cdots X_{s_0-1}^{i_{s_0-1}})
\in Mon_{ s_0-1}^{ r}.\nonumber
\end{eqnarray}
The  induction assumption implies that $P^r (X_0, \cdots, X_{s_0-1})$ can be defined algorithmically such that (\ref{tsr2}) holds.

Now  we will construct polynomials $P ^{r-1}$, $P ^{r-2}, \cdots ,P  ^ 0$ inductively from (\ref{tsr}), (\ref{tsr1}) and (\ref{tsr2}). 
Assume this has been done for all $P^r, \cdots, P^{r-k+1}, \, 1\le k\le r+1$.  Substituting $X_{s_0} = k$ into (\ref{tsr1}) and comparing this with (\ref{tsr}), we obtain the following defining equation for $P^{r-k}: \F ^{s_0-1} \to \F$
\begin{eqnarray}
f_{S_{s_0, r}} (i_1, \cdots, i_{s_0-1}, k ) = P ^r (i_1, \cdots, i_{s_0-1}) + k P ^{r -1} (i_1, \cdots, i_{s_0-1}) + \cdots \nonumber \\
 + k ! P ^{r-k} (i_1, \cdots, i_{s_0-1}) = b_{i_1, \cdots,  i_{s_0 -1}, k}.\label{combi4a}\\
 \LLR P ^{r-k} (i_1, \cdots, i_{s_0-1}) = \beta^{r-k}_{ i_1, \cdots, i_{s_0-1}}\in \F\label{combi4}\\
 \text{ for all} (i_1, \cdots, i_{s_0-1})  \text { such that }  (X_1^{i_1} \cdots X_{s_0-1}^{i_{s_0-1}})
\in Mon_{ s_0-1}^{ r} \text{ and for }\nonumber\\
 \beta^{r-k}_{ i_1, \cdots, i_{s_0-1}}: = {1\over k !} [ b_{i_1, \cdots,  i_{s_0 -1}, k}-(P ^r (i_1, \cdots, i_{s_0-1})+\label{combi4b}\nonumber\\
   + k P ^{r -1} (i_1, \cdots, i_{s_0-1}) + \cdots + k!P^{r-k +1}(i_1, \cdots, i_{s_0-1}) )].\nonumber
\end{eqnarray}
By the induction assumption $P^{r-k}$ can be algorithmically constructed using (\ref{combi4}). This completes the induction step. Hence Proposition  \ref{Lemma 4.4} is valid for all $s$.
\end{proof}

\begin{corollary}[cf. Example \ref{mom}] \label{colin}  Assume that    $\{ b_1, \cdots, b_m\}$  are linearly independent  vectors  in $\F^m$. Then  there exists  a polynomial map $f : \F \to \F ^m$ of degree $m$ whose image  contains  the points $b_0 = 0,  b_1, \cdots, b_m$. 
In  other words, $f$ is is $(m-1, 1)$-elusive.
\end{corollary}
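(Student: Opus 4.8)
The plan is to deduce Corollary \ref{colin} directly from Proposition \ref{Lemma 4.4} together with Lemma \ref{ev} and Example \ref{mom}. First I would set $n = 1$, $s = m-1$, $r = 1$ in the setup of Proposition \ref{Lemma 4.4}: then $\binom{s+r}{r} = \binom{m}{1} = m$, wait — I need $m+1$ points, so actually I should apply Proposition \ref{Lemma 4.4} in the one-variable case with degree $r = m$, so that $\dim Pol^m(\F^1) = \binom{m+1}{1} = m+1$. Concretely, I would invoke Proposition \ref{Lemma 4.4} with $s = 1$, degree $m$, target dimension $m$, and the prescribed $(m+1)$-tuple of points $b_0 = 0, b_1, \dots, b_m \in \F^m$. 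This yields an algorithmically constructed polynomial mapping $f := f_{S_{1,m,m}} : \F \to \F^m$ of degree $m$ satisfying $f(k) = b_k$ for $k = 0, 1, \dots, m$; in particular $b_0 = 0, b_1, \dots, b_m$ all lie in $f(\F)$.

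Second, I would argue that this $f$ is $(m-1,1)$-elusive. The cleanest route is via Example \ref{mom}: a polynomial mapping into $\F^m$ is $(m-1,1)$-elusive if and only if its image is not contained in any affine hyperplane of $\F^m$. Since $b_1, \dots, b_m$ are linearly independent, they together with $b_0 = 0$ form an affinely independent $(m+1)$-tuple spanning all of $\F^m$ affinely, hence cannot lie in a common affine hyperplane; a fortiori $f(\F) \supseteq \{b_0, \dots, b_m\}$ is not contained in any affine hyperplane, so $f$ is $(m-1,1)$-elusive. Alternatively, and perhaps more in the spirit of the preceding material, I would note that by Lemma \ref{ev} it suffices to check that the $(m+1)$-tuple $(b_0, b_1, \dots, b_m)$ is itself $(m-1,1)$-elusive: an arbitrary degree-$1$ mapping $\Gamma : \F^{m-1} \to \F^m$ has image an affine subspace of dimension at most $m-1$, which cannot contain $m+1$ affinely independent points; so $S_{m+1} = (b_0,\dots,b_m) \not\subset \Gamma(\F^{m-1})$, and since $S_{m+1} \subset f(\F)$, Definition \ref{pelu} gives that $f$ is $(m-1,1)$-elusive (indeed $(m-1,1,m+1)$-elusive).

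There is essentially no serious obstacle here — the corollary is a packaging of Proposition \ref{Lemma 4.4} in the special case $s = 1$ (where the interpolation reduces to classical Newton interpolation, equation (\ref{4.1.1})) with the elementary linear-algebra observation that $m+1$ affinely independent points escape every affine hyperplane. The only point requiring a line of care is the index bookkeeping: one must check that the degree-$m$ Newton interpolation through the $m+1$ prescribed values at the nodes $0, 1, \dots, m$ indeed produces a mapping of degree (at most, hence exactly if the data are generic, but in any case at most) $m$, and that "degree $m$" in the statement is read as "degree at most $m$" — which is consistent with the convention $Pol^r$ used throughout and with the phrasing of Proposition \ref{Lemma 4.4}. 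I would also remark, for completeness, that $f$ may be taken to be the moment curve $t \mapsto (t, t^2, \dots, t^m)$ composed with a suitable linear automorphism sending the standard affine frame to $(0, b_1, \dots, b_m)$, which makes the construction fully explicit and recovers Example \ref{mom}.
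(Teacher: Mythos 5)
Your proposal is correct and follows exactly the route the paper intends: use Proposition \ref{Lemma 4.4} with $s=1$, degree $r=m$ (Newton interpolation, $m+1$ nodes $0,1,\dots,m$) to produce a degree-$m$ map $f:\F\to\F^m$ through $0,b_1,\dots,b_m$, then observe that these $m+1$ affinely independent points cannot lie in the image of any degree-$1$ map $\Gamma:\F^{m-1}\to\F^m$, whose image is an affine subspace of dimension at most $m-1$. The paper leaves this unproved as an immediate consequence of Proposition \ref{Lemma 4.4} together with the discussion in Example \ref{mom}, and your bookkeeping (including the self-correction from $\binom{s+r}{r}=m$ to the right count of $m+1$) is exactly what is needed.
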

 
Let us consider the interpolation problem for homogeneous polynomial mappings. Since  each homogeneous polynomial $ f\in Pol^r _{hom} (\F^{n+1}, \F^m)\\ \subset Pol^r (\F^{n+1}, \F^m)$ is  defined uniquely by the value of its  restriction to  the hyperplane $b^{n +1} = 1$ in $\F^{n+1}$, we get immediately from Proposition \ref{Lemma 4.4}

\begin{corollary}\label{inthom} 
1. Given a  tuple $S_{s,r,m}$ of  $\mathrm{\binom{s+r}{r}}$ points  $b_{i_1, \cdots , i_s}$ in $\F ^m$, where $i_j \in \N$ and $\sum_{j=1}^s i_j \le r$, there exists
an algorithmically constructed homogeneous polynomial  mapping  $ f_{S_{s,r,m}} : \F ^{s+1} \to \F ^m$ of degree $r$ 
such that 
\begin{equation}
f_{S_{s,r,m}} (i_1, \cdots, i_s, 1) = b_{i_1, \cdots , i_s} \label{interp1}
\end{equation}
for all $(i_1, \cdots, i_s)$ satisfying $i_j \in \N$ and $\sum_{j =1} ^s i_j \le r$.

2. Let  us abbreviate $\binom{s+r}{r}$ by $b(s,r)$.  Proposition \ref{Lemma 4.4} and the formulas  in its proof give a linear isomorphism
\begin{equation}
I^{b(s,r)}_{m }: \F ^{m b(s,r)} \to  Pol^r (\F ^s, \F ^m),\label{inter2}
\end{equation}
which associates any point $\overline{S_{s,r,m}} \in \F ^{m b(s,r)} $ with a polynomial mapping $f_{S_{s,r,m}}\in Pol^r (\F ^s, \F ^m)$.
\end{corollary}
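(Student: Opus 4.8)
The plan is to deduce both statements from Proposition \ref{Lemma 4.4} together with the homogenization trick, so that essentially nothing new has to be proved. For part 1, I would first apply Proposition \ref{Lemma 4.4} to obtain the polynomial mapping $g := f_{S_{s,r,m}}\colon\F^s\to\F^m$ of degree $r$ with $g(i_1,\dots,i_s)=b_{i_1,\dots,i_s}$ for all $(i_1,\dots,i_s)\in\N^s$ satisfying $\sum_{j}i_j\le r$. Then I would homogenize $g$ in a fresh variable $X_{s+1}$: writing $g=\sum_{d=0}^{r}g_d$ with $g_d$ the degree-$d$ homogeneous component (taken coordinatewise), set $\tilde g(X_1,\dots,X_{s+1}):=\sum_{d=0}^{r}g_d(X_1,\dots,X_s)\,X_{s+1}^{\,r-d}$. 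Then $\tilde g\in Pol^r_{hom}(\F^{s+1},\F^m)$, since $\tilde g(aX)=a^{r}\tilde g(X)$, and $\tilde g(X_1,\dots,X_s,1)=g(X_1,\dots,X_s)$, so $\tilde g(i_1,\dots,i_s,1)=b_{i_1,\dots,i_s}$, which is (\ref{interp1}). As $g$ is produced algorithmically and homogenization is explicit, so is $\tilde g$, and we take $f_{S_{s,r,m}}:=\tilde g$ for part 1.

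For part 2, identify a tuple $S_{s,r,m}$ of $b(s,r)=\binom{s+r}{r}$ points $b_{i_1,\dots,i_s}\in\F^m$ with the point $\overline{S_{s,r,m}}\in\F^{mb(s,r)}$ assembled from their coordinates, exactly as $\overline{S_k}$ was formed before Lemma \ref{ev}, and define $I^{b(s,r)}_m(\overline{S_{s,r,m}})$ to be the polynomial mapping $\F^s\to\F^m$ of degree $r$ furnished by Proposition \ref{Lemma 4.4}. I would argue it is a linear isomorphism as follows. Let $E\colon Pol^r(\F^s,\F^m)\to\F^{mb(s,r)}$ be the evaluation map sending $h$ to its tuple of values at the lattice points $(i_1,\dots,i_s)$, $\sum_j i_j\le r$; since evaluation at a fixed point is linear, $E$ is linear. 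By Proposition \ref{Lemma 4.4} the interpolant takes exactly the prescribed values at those points, i.e. $E\circ I^{b(s,r)}_m=\mathrm{id}$, so $E$ is surjective. Now by (\ref{combi1}) one has $\dim Pol^r(\F^s,\F^m)=m\binom{s+r}{r}=m\,b(s,r)=\dim\F^{mb(s,r)}$; a surjective linear map between finite-dimensional spaces of equal dimension is an isomorphism, hence $E$ is invertible and $I^{b(s,r)}_m=E^{-1}$. In particular $I^{b(s,r)}_m$ is itself a linear isomorphism, which is the assertion of part 2; this also yields uniqueness of the interpolant.

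The one point that is not pure bookkeeping is that the construction in the proof of Proposition \ref{Lemma 4.4} is well-posed — that it returns one specific polynomial mapping rather than depending on choices — so this is where I would put the emphasis. Each of the linear systems solved along the way, namely (\ref{4.1.1.a}) in the base case and (\ref{combi4a})--(\ref{combi4}) in the inductive step, is triangular with diagonal entries $0!,1!,\dots,r!$, and these are invertible precisely because $\F$ has characteristic $0$, the standing hypothesis of the note. Granting this, the homogenization in part 1, the identification of tuples with points of $\F^{mb(s,r)}$, and the surjectivity-plus-dimension argument in part 2 are all routine, so the corollary follows essentially by inspection of Proposition \ref{Lemma 4.4} and its proof.
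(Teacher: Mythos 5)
Your proposal is correct and follows the same underlying idea the paper gestures at: homogeneous degree-$r$ polynomials on $\F^{s+1}$ correspond to degree-$\le r$ polynomials on $\F^s$ by setting $X_{s+1}=1$, so Part 1 is Proposition \ref{Lemma 4.4} plus homogenization. Where you go beyond the paper is in Part 2: the paper merely asserts that the formulas in the proof of Proposition \ref{Lemma 4.4} give a linear isomorphism, whereas you supply a clean justification by introducing the evaluation map $E$ at the lattice points, noting that $E\circ I^{b(s,r)}_m=\mathrm{id}$ forces $E$ to be a surjective linear map, and then invoking the dimension count $\dim Pol^r(\F^s,\F^m)=m\binom{s+r}{r}$ from (\ref{combi1}) to conclude $E$ is invertible and $I^{b(s,r)}_m=E^{-1}$. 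This is a slightly slicker route than inspecting the triangular systems (\ref{4.1.1.a}) and (\ref{combi4a})--(\ref{combi4}) directly for linearity in the data $b$, and it has the added benefit of giving uniqueness of the interpolant for free; your closing remark about the diagonal entries $0!,\dots,r!$ being invertible in characteristic $0$ correctly identifies the only place where well-posedness of the construction actually needs to be checked.
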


 Proposition \ref{Lemma 4.4}  motivates the following
 
\begin{definition}\label{strel} A mapping $f\in Pol ^p (\F ^n, \F ^m)$ is called {\it strongly $(s,r)$-elusive}, if  the set  $\{ f (i_1, \cdots, i_n)|  i_j \in \N \text { and } \sum _{j =1} ^ n i_j \le p\} $ is  $(s,r)$-elusive.
\end{definition}

\begin{remark}\label{elstr} 1.  By Lemma \ref{ev}, a polynomial mapping $f \in Pol^p(\F ^n, \F ^m)$ is strongly $(s,r)$-elusive, if and only if
the point in $\F ^{mb(s,r)}$ associated with the  tuple $(f(i_1, \cdots, i_n)|  i_j \in \N \text { and } \sum _{j =1} ^ n i_j \le p) $
does not belong to the image of the evaluation mapping $Ev _{r,s,m}  ^{b (p,n)}$.

2. A strongly $(s,r)$-elusive polynomial mapping $f \in Pol ^p (\F ^n, \F ^m)$ is $(s,r,k)$-elusive for any $k \ge \binom{n+p}{p}$, and, hence, it is $(s,r)$-elusive.

3.  If $f\in Pol ^p (\F ^n, \F^m)$ is  (strongly) $(s,r)$-elusive, then it  is  (strongly) $(s', r)$-elusive  for any $s'  \le  s$.

\end{remark}

\section{Zariski closure of the image of a polynomial mapping, effective elimination theory and algebraic number field theory}

 Remark \ref{elstr} asserts that   
a verification of the strong $(s,r)$-elusiveness  of a polynomial mapping $f$ can be reduced
to the following problem. Given a  polynomial map $\tilde f : \F ^ n \to \F^m$ and  given a point  $b \in \F^m$, 
verify whether  $b$  belongs to the image $\tilde f (\F^m)$.  This problem is in fact a part of the elimination theory,  which we discuss in this section (Lemma \ref{ker}, Corollary \ref{cloim}). We   develop the method invented by Kumar-Lokam-Patankar-Sarma \cite{KLPS2010} that uses effective elimination theory   combined with  algebraic number field theory in order to get  concrete points  $b$ which do not belong to the Zariski closure of the image of a polynomial mapping $\tilde f$, if $\tilde f$ is defined over $\Q$, such that   the coordinates of $b$ are algebraic numbers (Proposition \ref{glok}). This result will be used in  the next section  to find a sufficient condition for a polynomial mapping $f$ to be  strongly $(s,r)$-elusive. As a consequence, we will   construct  in the next section concrete polynomial mappings and multivariate polynomials  whose  circuit  size  is large.  We note that the idea  to use  algebraic numbers to construct   polynomials which are hard  to compute  first appeared in the works  by Strassen-Schnorr and Heintz-Sieveking   (Remark \ref{polq}.1).

Given a polynomial mapping $f \in Pol( \F^{n} , \F^{n+k})$, where  $\F = \R$ or $\F = \C$ and $k \ge 1$, we are interested in the image $f (\F^n) \subset \F^ {n +k}$. 
There are also several available methods to detect whether a point $b$  belongs to   the Zariski closure $\overline{f (\F ^ n)}$ of $f(\F^n) \subset \F ^ {n+k}$, based on  algebraic description of   the ideal of the sub-variety $\overline{f(\F^n)}$.    
The polynomial mapping  $f = (f^1, \cdots, f^{n+k})$ induces  a ring homomorphism
$$ f ^ * : \F[Y_1, \cdots, Y_{n +k}] \to \F [X_1, \cdots, X_n],\quad Y_i \mapsto f^i(X_1, \cdots, X_n) .$$
Denote by $I(f (\F ^ n))$ the ideal  of $f (\F ^ n)$ (i.e. the ideal  of all polynomials on $\F^m$ which vanish on $f(\F^n)$).

\begin{lemma}(\cite[Proposition 15.30]{Eisenbud1994}, \cite[Lemma 1.8.16]{G-P2007}) \label{ker} Assume that $f$ is a polynomial mapping from $\F^n$ to $\F^{n+k}$. Then\\
1. $\ker f^* = I (f (\F ^ n)) = I(\overline{f (\F ^ n)})$.\\
2. Let $I$ be  the ideal in $\F[X_1, \cdots,X_n,Y_1, \cdots, Y_{n+k}]$ generated by $\{ Y_1 - f^1, \cdots, Y_{n+k} - f^{n+k}\}$. Then
$$\ker f ^ * = I  \cap \F [Y_1,\cdots, Y_{n+k}] .$$
\end{lemma}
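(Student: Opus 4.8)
The plan is to establish the two equalities in part 1 first, and then deduce part 2 from part 1 by an elimination argument. For part 1, I would begin with the inclusion $\ker f^* \subseteq I(f(\F^n))$: if $g \in \F[Y_1,\dots,Y_{n+k}]$ satisfies $f^*(g) = 0$, then for every $a \in \F^n$ we have $g(f(a)) = (f^*g)(a) = 0$, so $g$ vanishes on $f(\F^n)$, i.e. $g \in I(f(\F^n))$. Conversely, if $g$ vanishes on $f(\F^n)$, then $f^*(g)$ is a polynomial in $\F[X_1,\dots,X_n]$ vanishing at every point of $\F^n$; since $\F$ is an infinite field (being $\R$, $\C$, or more generally of characteristic $0$), a polynomial vanishing at all points of $\F^n$ is the zero polynomial, so $f^*(g) = 0$, i.e. $g \in \ker f^*$. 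This gives $\ker f^* = I(f(\F^n))$. The equality $I(f(\F^n)) = I(\overline{f(\F^n)})$ is then immediate from the basic fact that a polynomial is continuous (Zariski-continuous) so its zero set is closed, hence it vanishes on a set if and only if it vanishes on that set's Zariski closure; alternatively one cites \cite[Proposition 15.30]{Eisenbud1994} directly.

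For part 2, I would argue as follows. Let $J := I \cap \F[Y_1,\dots,Y_{n+k}]$ where $I = (Y_1 - f^1, \dots, Y_{n+k} - f^{n+k}) \subset \F[X_1,\dots,X_n,Y_1,\dots,Y_{n+k}]$. The key observation is that the quotient ring $\F[X,Y]/I$ is isomorphic to $\F[X_1,\dots,X_n]$ via the map sending $X_i \mapsto X_i$ and $Y_j \mapsto f^j(X)$, since the relations $Y_j = f^j(X)$ allow one to eliminate all the $Y$-variables. Under this isomorphism the composite $\F[Y_1,\dots,Y_{n+k}] \hookrightarrow \F[X,Y] \twoheadrightarrow \F[X,Y]/I \cong \F[X]$ is exactly $f^*$. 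Therefore the kernel of $\F[Y] \to \F[X,Y]/I$ is $\ker f^*$, and on the other hand this kernel is precisely the set of elements of $\F[Y]$ that lie in $I$, namely $I \cap \F[Y] = J$. Hence $\ker f^* = J$. One may also simply cite \cite[Lemma 1.8.16]{G-P2007} for this standard elimination statement.

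The only genuinely delicate point — and the one I would flag as the main obstacle if one insisted on a fully self-contained argument — is the claim that $\F[X,Y]/I \cong \F[X]$, i.e. that the ideal $I$ generated by the $Y_j - f^j$ is exactly the kernel of the substitution homomorphism $\varphi\colon \F[X,Y] \to \F[X]$, $X_i \mapsto X_i$, $Y_j \mapsto f^j$, rather than merely being contained in it. This is verified by a straightforward reduction: given any $P(X,Y) \in \ker\varphi$, repeatedly rewrite each monomial in $Y_j$ by substituting $Y_j \equiv f^j(X) \pmod{I}$, which reduces $P$ modulo $I$ to a polynomial $Q(X) \in \F[X]$ involving no $Y$-variables; then $\varphi(P) = Q$, so $P \in \ker\varphi$ forces $Q = 0$, whence $P \in I$. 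Everything else is formal manipulation of ring homomorphisms and the infinitude of $\F$, so I would keep the write-up brief and lean on the two cited references for the routine verifications.
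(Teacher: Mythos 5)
The paper does not prove Lemma~\ref{ker} at all: it simply cites \cite[Proposition 15.30]{Eisenbud1994} and \cite[Lemma 1.8.16]{G-P2007}, so there is no in-paper proof to compare against. Your self-contained argument is correct and is essentially the standard one that those references give. For part~1 you use, correctly, that $\F$ is infinite (which is guaranteed by the paper's standing assumption $\mathrm{char}\,\F = 0$) to conclude that $f^*g$ vanishing on all of $\F^n$ forces $f^*g = 0$; without infinitude this step would fail, so it is worth flagging that hypothesis explicitly. The second equality $I(f(\F^n)) = I(\overline{f(\F^n)})$ is indeed immediate from the fact that the vanishing set of a polynomial is Zariski-closed. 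For part~2 your identification of $\ker f^*$ with $I \cap \F[Y_1,\dots,Y_{n+k}]$ via the isomorphism $\F[X,Y]/I \cong \F[X]$ is the right mechanism, and the delicate point you single out --- that $I$ is exactly, not merely contained in, the kernel of the substitution map $\varphi$ --- is genuinely the only nontrivial verification; your reduction argument (rewrite $Y_j$ as $f^j(X)$ modulo $I$, using the one-variable factorization $Y_j^k - (f^j)^k = (Y_j - f^j)(\cdots)$ iteratively, to land in $\F[X]$, then compare with $\varphi(P)$) handles it correctly. In short, you have supplied a complete proof where the paper only cites.
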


\begin{remark}\label{cloim} Let $f: \F ^n \to \F^m$ and $g: \F^s \to \F^m$ be two polynomial mappings.  Clearly, $f(\F^n) \not \subset g(\F^s)$, if
$\overline{f(\F^n)} \not\subset \overline{g(\F^s)}$, equivalently by Lemma \ref{ker}, if  $\ker  f ^* \not \supset \ker g^*$.
\end{remark}

In general it is hard to find explicitly an element in $\ker f^*$. We know   only  algorithms  for   determining the generators of $\ker f ^ * = I  \cap \F [Y_1,\cdots, Y_{n+k}]$  based on Gr\"obner's basis 
or  on resultants for  determining  a  special  element  of $\ker f ^ *$  of  the corresponding system of polynomials, see e.g. \cite{G-P2007}. 
These algorithms  are time-consuming, and they do  not  give us  any partial knowledge  of the generators of $\ker f^*$ at the first glance.  In \cite{KLPS2010}  Kumar, Lokam, Patankar and Sarma  used a result in effective elimination theory to get partial knowledge of an element in $\ker f^*$ and   combining this knowledge with  algebraic number field theory they obtained concrete matrices with high rigidity.  
Our extension of their method  also uses  the  same result  in  effective elimination theory, namely the following

\begin{lemma} (\cite[p.6 Theorem 4]{BMMR2002}) \label{efeli} Let $I =\la f^1, \cdots, f^s\ra$   be an  ideal in the polynomial ring $\F [Y_1, \cdots, Y_m]$ over an infinite field $\F$.  Let  $d$ be the maximum total degree of the generators $f^i$.  Let $Z = \{ Y_{i_1}, \cdots, Y_{i_l}\}$ be a subset  of indeterminates $\{ Y_1, \cdots, Y_m\}$. If $ I \cap \F [Z] \not = 0$ then  there exists  a non-zero polynomial $g \in I \cap \F[Z]$ such that $g = \sum _{i =1} ^s  g^i f^i$ with $g^i \in \F [Y_1, \cdots, Y_m]$ and $\deg (g^if^i) \le (\mu +1)(m +2) ( d^\mu +1) ^{ \mu +2}$ for $i \in [1,s]$, where $\mu = \min \{s, m\}$.
\end{lemma}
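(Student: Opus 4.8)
The plan is to reduce the statement to an effective Nullstellensatz over the rational function field $\F(Z)$ and then descend the resulting relation back to $\F[Y_1,\dots,Y_m]$ while tracking degrees. Write $W=\{Y_j:\,Y_j\notin Z\}$ and $N:=|W|=m-l$, and regard each $f^i$ as an element of $\F[Z][W]=\F[Y_1,\dots,Y_m]$ of degree $\le d$ in the variables $W$ (its ``coefficients'' being polynomials in $Z$ of degree $\le d$). The first point is that $I\cap\F[Z]\ne 0$ if and only if the extended ideal $I\cdot\F(Z)[W]$ is the unit ideal: if $0\ne g\in I\cap\F[Z]$ then $g$ is invertible in $\F(Z)$, so $1\in I\cdot\F(Z)[W]$; conversely a relation $1=\sum_i h^i f^i$ with $h^i\in\F(Z)[W]$, after multiplication by a common denominator $Q\in\F[Z]\setminus\{0\}$, becomes $Q=\sum_i(Qh^i)f^i$, exhibiting $Q$ as a nonzero element of $I\cap\F[Z]$. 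Hence it suffices to show that, given that $f^1,\dots,f^s$ generate the unit ideal of $\F(Z)[W]$, one can choose such a relation with $Qh^i\in\F[Y_1,\dots,Y_m]$ and $\deg\big((Qh^i)f^i\big)$ bounded by the stated quantity.

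I would obtain such a relation in two stages. First, an effective Nullstellensatz over the field $\F(Z)$ (Koll\'ar--Brownawell type bounds, with the usual correction for small $d$, which is what produces ``$d^\mu+1$'' rather than ``$d^\mu$'') gives $1=\sum_i h^i f^i$ in $\F(Z)[W]$ with $\deg_W(h^if^i)$ at most of order $d^{\min(N,s)}\le d^{\mu}$. Second, once this $W$-degree bound $\beta$ is fixed, the coefficients of the $h^i$ (which lie in $\F(Z)$) are constrained by an inhomogeneous linear system over $\F(Z)$ whose matrix has entries among the coefficients of the $f^i$ --- elements of $\F[Z]$ of degree $\le d$ --- and whose size is governed by the number $\sigma$ of $W$-monomials of degree $\le\beta$. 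Solving this system by Cramer's rule, one may take the common denominator $Q$ and the numerators $Qh^i$ to be (sub)determinants of such a matrix, so that they lie in $\F[Z]$ with $\deg_Z\le\sigma d$. Collecting the $W$- and $Z$-contributions gives $\deg\big((Qh^i)f^i\big)\le(\beta+d)+(\sigma d+d)$, and it remains to check that this is at most $(\mu+1)(m+2)(d^\mu+1)^{\mu+2}$. A more geometric route to a bound of the same shape: over $\overline{\F}$ the hypothesis forces the Zariski closure of the projection $\pi_Z(V(I))$ to be a proper subvariety of the $Z$-space, whose degree is at most $\deg V(I)\le d^{\operatorname{codim}V(I)}\le d^{\mu}$ by B\'ezout; hence it lies in a hypersurface $\{g_0=0\}$ with $0\ne g_0\in\F[Z]$ of degree $\le d^\mu$, one has $g_0\in\sqrt I$, and Koll\'ar's Nullstellensatz for radical membership produces a power $g_0^{\delta}\in I\cap\F[Z]$ together with cofactors of controlled degree.

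The main obstacle is exactly this last bookkeeping step: keeping $\sigma$ --- equivalently, the number of monomials of degree $\le\beta$, equivalently the size of the Cramer matrix --- small enough to reach the stated exponent $\mu+2$. The crude estimate $\sigma\le\binom{N+\beta}{N}\approx(d^\mu+1)^N$ is too weak when $s<m$, since $N$ may be as large as $m$; to replace the exponent $N$ by $\min(N,s)\le\mu$ one first passes to generic coordinates among the $W$-variables --- an effective Noether-normalization step, where the hypothesis that $\F$ is \emph{infinite} is used --- after which the elimination is effective in only $\min(N,s)$ of them, so that $\sigma\le(d^\mu+1)^{\mu}$ up to the polynomial-in-$m$-and-$\mu$ factors that surface in the final bound. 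The remaining work is to track the small-$d$ correction, the cost of the change of coordinates, and elementary estimates such as $\binom{a}{b}\le(a+1)^b$, and to assemble everything into the single total-degree bound $(\mu+1)(m+2)(d^\mu+1)^{\mu+2}$. One must also remember that $\F$ is only assumed infinite --- not algebraically closed, and of arbitrary characteristic --- so the effective Nullstellensatz and the generic-position arguments have to be invoked in forms valid in that generality; this is precisely what is carried out in \cite{BMMR2002}, whose argument we follow for the full details.
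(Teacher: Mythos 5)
The paper offers no proof of this lemma; it is quoted from \cite[Theorem~4]{BMMR2002}, so there is nothing in-paper to compare your sketch against. As an outline, your plan is sensible: the equivalence ``$I\cap\F[Z]\neq 0$ iff $I\cdot\F(Z)[W]=(1)$'' is correct and is the standard starting point, the two-stage strategy (effective Nullstellensatz over the function field $\F(Z)$ for the $W$-degrees, then Cramer's rule over $\F(Z)$ to clear denominators and bound $Z$-degrees) is the natural route to a bound of this shape, and you correctly locate where the hypothesis ``$\F$ infinite'' enters.

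However, the proposal as written does not actually establish the stated bound, and the gap is precisely the one you flag yourself. In the Cramer step the matrix size is governed by the number of $W$-monomials of degree at most $\beta\approx d^{\mu}$, which is of order $(d^{\mu}+1)^{N}$ with $N=m-l$; this produces a $Z$-degree bound with exponent $N$, not the required $\mu+2$, and $N$ can exceed $\mu$ by an arbitrary amount. Your proposed remedy---generic $W$-coordinates ``so the elimination is effective in only $\min(N,s)$ of them,'' attributed to an effective Noether normalization---does not obviously apply: Noether normalization arranges a nonempty variety finitely over a linear subspace, whereas $V(f^1,\dots,f^s)$ over $\overline{\F(Z)}$ is empty by hypothesis, so some other device is needed to cut the effective number of variables down to $\min(N,s)$ (for instance a $U$-resultant or Macaulay-resultant built from $\min(N,s)$ generic linear combinations, or the finer internal structure of the Koll\'ar--Brownawell cofactors). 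Likewise the polynomial prefactor $(\mu+1)(m+2)$ and the small-$d$ correction replacing $d^{\mu}$ by $d^{\mu}+1$ are asserted, not derived. You explicitly defer all of this bookkeeping to \cite{BMMR2002}, which is reasonable given that the paper under review also only cites the result; but as written your argument is a roadmap rather than a proof, and the quantitative exponent $\mu+2$---which is the whole content of the lemma---is not reached.
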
 

Set $D(m,r) =  (m+1)(m+2) ( r^m  +1) ^{m+2}$.

\begin{remark} \label{imp} Applying  Lemmata \ref{ker} and \ref{efeli} to the ideal  $I =\la  Y_1 - f^1, \cdots, Y_{n+k} - f^{n+k}\ra $,
 and to $Z = \{ Y_1,\cdots, Y_{n+k}\}$, observing that $ I \cap \F [Z] \not = 0$ if $k\ge 1$,  we  obtain  
 the existence of a  polynomial $ g \in \ker f ^* = I \cap \F [Z]$  whose degree  is less  than or equal  $D(n+k,\deg f)$. 
 Here $\deg f$ is the total degree  of the generators $f^i$. Thus, to prove that a point $b\in \F^{n+k}$ does not belong to the image  $f (\F^n)\subset \F^{n+k}$ it suffices to show that $g (b) \not = 0$  for any  $g \in Pol ^{D(n +k ,\deg f)}(\F ^n)$.  
\end{remark}

 To find  such a point  $b\in \F^{n+k}$  we use the algebraic number field theory, assuming $\F = \R$ or $\F = \C$, and that  $f$ is defined over $\Q$, i.e.,  all polynomials  $f^i$ in question are defined over $\Q$. 
\medskip
The following Proposition  is a  generalization of \cite[Theorem 8]{KLPS2010}.
 
 \begin{proposition}\label{glok}   Let  $s \le m-1$ and $ f : \F^s \to \F^ m$ be a polynomial mapping over $\Q$  of degree $r$. 
 
 1.  Assume that $p_1, \cdots, p_{s+1}$ are distinct  prime numbers  such that $p_i \ge D (m,r)+2$ for all $i$.   Set 
 $$b ^i : = e ^{ 2\pi \sqrt{-1}\over p_i}\text{ and } \tilde b ^i : =\sum_{j = 1} ^i a_j^ib ^j$$ 
where $ a_j^i \in \Q$ and $a^i _i \not = 0$.    Then
 $\tilde b = (\tilde b^1, \cdots, \tilde b ^{s+1}, a ^{s+2}, \cdots, a ^m)\in \C^m$  does not belong to the   image of $f$  for $\F = \C$ and  for any $(a^{s+2}, \cdots, a^m)\in \Q^{m-s}$.
 
 2. Assume that $p_1, \cdots, p_{s+1}$ are distinct  prime numbers  such that $p_i \ge 2D (m,r)+3$ for all $i$.  Set 
 $$b ^i : = e ^{ 2\pi \sqrt{-1}\over p_i}\text{ and } \tilde b ^i : =\sum_{j = 1} ^i a_j^i(b ^j  + \overline{b^j})$$ 
 where  $ a_j^i \in \Q$ and $a^i _i \not = 0$.  
Then
 $\tilde b = (\tilde b^1, \cdots, \tilde b ^{s+1}, a ^{s+2}, \cdots, a ^m)\in \R^m$  does not belong to the   image of $f$  for $\F = \R$ and  for any $(a^{s+2}, \cdots, a^m)\in \Q^{m-s}$.
\end{proposition}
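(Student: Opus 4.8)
The plan is to prove the stronger statement that $\tilde b$ lies outside the Zariski closure $\overline{f(\F^s)}$, by combining the elimination theory of Lemma \ref{ker}, Lemma \ref{efeli} and Remark \ref{imp} with the arithmetic of cyclotomic fields, in the spirit of \cite{KLPS2010}. First I would pass to the first $s+1$ coordinates. The components $f^1,\dots,f^{s+1}\in\F[X_1,\dots,X_s]$ are $s+1$ polynomials in a ring of transcendence degree $s$ over $\F$, hence algebraically dependent: there is a nonzero $g\in\F[Y_1,\dots,Y_{s+1}]$ with $g(f^1,\dots,f^{s+1})=0$, that is, $g\in\ker f^*\cap\F[Y_1,\dots,Y_{s+1}]$. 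Since $f$ is defined over $\Q$, this elimination ideal is defined over $\Q$, so by Lemma \ref{efeli} and Remark \ref{imp} we may take $g$ with rational coefficients and $\deg g\le D(m,r)$. By Lemma \ref{ker}, $g$ vanishes on $\overline{f(\F^s)}$; since $g$ involves only $Y_1,\dots,Y_{s+1}$, to conclude $\tilde b\notin\overline{f(\F^s)}$ it suffices to check that $g(\tilde b^1,\dots,\tilde b^{s+1})\neq0$, and this does not involve the remaining (rational) coordinates $a^{s+2},\dots,a^m$.

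Now the arithmetic core of part 1. The matrix $(a^i_j)_{1\le j\le i\le s+1}$ is lower triangular with nonzero diagonal entries $a^i_i$, hence invertible over $\Q$; inverting it expresses each $b^j$ as a $\Q$-linear combination of $\tilde b^1,\dots,\tilde b^j$, so after this substitution $g(\tilde b^1,\dots,\tilde b^{s+1})=\hat g(b^1,\dots,b^{s+1})$ for a nonzero $\hat g\in\Q[Y_1,\dots,Y_{s+1}]$ with $\deg\hat g=\deg g\le D(m,r)$, where $b^j=\zeta_{p_j}:=e^{2\pi\sqrt{-1}/p_j}$ is a primitive $p_j$-th root of unity. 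I then use two standard facts: $\{1,\zeta_{p_j},\dots,\zeta_{p_j}^{\,p_j-2}\}$ is a $\Q$-basis of $\Q(\zeta_{p_j})$, and, since $p_1,\dots,p_{s+1}$ are distinct primes, the fields $\Q(\zeta_{p_j})$ are linearly disjoint over $\Q$; consequently the products $\prod_{j=1}^{s+1}\zeta_{p_j}^{\,e_j}$ with $0\le e_j\le p_j-2$ are $\Q$-linearly independent. Every monomial $\prod_jY_j^{e_j}$ occurring in $\hat g$ satisfies $\sum_je_j\le\deg\hat g\le D(m,r)$, so $e_j\le D(m,r)\le p_j-2$ by the hypothesis $p_j\ge D(m,r)+2$; hence $\hat g(\zeta_{p_1},\dots,\zeta_{p_{s+1}})$ is a rational combination of pairwise distinct, $\Q$-linearly independent numbers in which some coefficient (a coefficient of $\hat g$) is nonzero, so it does not vanish. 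Thus $g(\tilde b^1,\dots,\tilde b^{s+1})\neq0$, and by the reduction above $\tilde b\notin\overline{f(\C^s)}\supseteq f(\C^s)$, which is part 1.

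Part 2 is the identical argument with each $\zeta_{p_j}$ replaced by $u_j:=b^j+\overline{b^j}=\zeta_{p_j}+\zeta_{p_j}^{-1}\in\R$, a generator of the maximal real subfield $\Q(\zeta_{p_j})^+$, which has degree $(p_j-1)/2$ over $\Q$ and $\Q$-basis $\{1,u_j,\dots,u_j^{(p_j-3)/2}\}$; the subfields $\Q(\zeta_{p_j})^+$ are again linearly disjoint over $\Q$, so the products $\prod_ju_j^{e_j}$ with $0\le e_j\le(p_j-3)/2$ are $\Q$-linearly independent, and the sharper hypothesis $p_j\ge2D(m,r)+3$ is precisely what guarantees $e_j\le D(m,r)\le(p_j-3)/2$ for every exponent occurring in $\hat g$. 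Hence $\hat g(u_1,\dots,u_{s+1})\neq0$, so $\tilde b\notin\overline{f(\R^s)}\supseteq f(\R^s)$.

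The point that needs care is purely quantitative: one must be sure the degree bound $D(m,r)$ produced by effective elimination is small relative to the primes $p_j$, which is exactly what the hypotheses $p_j\ge D(m,r)+2$ and $p_j\ge2D(m,r)+3$ encode; and one must observe that the linear change of variables from the $\tilde b^i$ to the $b^j$ does not raise the degree of the relevant polynomial. Everything else reduces to standard bookkeeping with linear disjointness of cyclotomic fields and their maximal real subfields together with Lemma \ref{ker}.
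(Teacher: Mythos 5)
Your proof is correct, and the arithmetic core takes a genuinely different route from the paper's. The paper reduces (via its Proposition \ref{modi}) to an \emph{inductive} argument on the number of coordinates: assuming $g(\tilde b^1,\dots,\tilde b^m)=0$ forces $[\Q(b^1,\dots,b^m):\Q(b^1,\dots,b^{m-1})]\le D$, contradicting the Galois-theoretic count $[\Q(b^m):\Q]=p_m-1\ge D+1$ applied coordinate by coordinate. You replace that induction by a single structural statement: linear disjointness of the fields $\Q(\zeta_{p_j})$ (resp.\ their maximal real subfields $\Q(\zeta_{p_j})^+$) over $\Q$, which gives directly that the monomials $\prod_j\zeta_{p_j}^{e_j}$ with $0\le e_j\le p_j-2$ (resp.\ $\prod_ju_j^{e_j}$ with $0\le e_j\le(p_j-3)/2$) are $\Q$-linearly independent; the degree constraint $D\le p_j-2$ (resp.\ $D\le(p_j-3)/2$) is exactly what keeps all exponents of $\hat g$ in range. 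Both routes are standard; yours is cleaner in that it dispenses with the induction and makes the role of the hypotheses $p_j\ge D+2$, $p_j\ge 2D+3$ transparent as basis-size bounds.

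You also make explicit a reduction that the paper leaves implicit and that is in fact necessary: restricting to the first $s+1$ coordinates. The paper's Proposition \ref{modi} assumes \emph{every} coordinate $\tilde b^1,\dots,\tilde b^m$ is of cyclotomic form $\sum_{j\le i}a^i_jb^j$, but in Proposition \ref{glok} the last $m-s-1$ coordinates $a^{s+2},\dots,a^m$ are rational. Directly substituting these rationals into a bounded-degree $g\in\ker f^*$ involving all $Y_1,\dots,Y_m$ could kill $g$ outright, so one really does need a \emph{nonzero} relation already in $\Q[Y_1,\dots,Y_{s+1}]$. You obtain it by noting that $f^1,\dots,f^{s+1}$ are $s+1$ polynomials in $s$ variables, hence algebraically dependent, and apply Lemma \ref{efeli} with $Z=\{Y_1,\dots,Y_{s+1}\}$ to get the degree bound. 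This is the right fix, and your observation that the lower-triangular invertible change of variables $\tilde b^i\leftrightarrow b^j$ preserves both nonvanishing and degree is the remaining point that needs (and receives) care.

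Two small cautions. First, your degree bookkeeping inherits the paper's constant $D(m,r)$, which comes from applying Lemma \ref{efeli} in a ring of $s+m$ (not $m$) variables; the literal bound from the cited lemma would be $(m+1)(s+m+2)(r^m+1)^{m+2}$ rather than $(m+1)(m+2)(r^m+1)^{m+2}$. This affects only the numerical constant, not the logic, and is an imprecision already present in the source. Second, when you invoke linear disjointness, it is worth saying explicitly that this follows from $\phi(p_1\cdots p_{s+1})=\prod_j\phi(p_j)$, so that $\Q(\zeta_{p_1},\dots,\zeta_{p_{s+1}})$ has a $\Q$-basis given by products of bases of the $\Q(\zeta_{p_j})$; the same multiplicativity passes to the real subfields since $[\Q(\zeta_{p_j}):\Q(\zeta_{p_j})^+]=2$.
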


\begin{proof}  Proposition \ref{glok} is a consequence of  Lemmas  \ref{ker}, \ref{efeli}, Remark \ref{imp} and Proposition \ref{modi} below.
\end{proof}

\begin{proposition}\label{modi} 
  Assume that $p_1, \cdots, p_m$ are distinct  prime numbers  such that $p_i \ge D +2$ for all $i$.    Set $b ^i : = e ^{ 2\pi \sqrt{-1}\over p_i}$ and $\tilde b ^i : =\sum_{j = 1} ^i a_j^ib ^j$, where $ a_j^i \in \Q$ and
  $a ^i _i \not = 0 $.  
  
  1. Then  for all $g \in Pol ^D (\Q^m)\subset Pol ^D (\C^m)$ we have
 $$g(\tilde b^1, \cdots, \tilde b ^{m})\not = 0.$$
 
 2. Then for all $g \in Pol ^{\lfloor {D +1\over 2} \rfloor} (\Q ^m)\subset Pol ^{\lfloor {D+1 \over 2} \rfloor} (\R ^m)$  we have
 $$ g ( Re (\tilde b^1) , \cdots, Re(\tilde b ^{m}))\not = 0,$$
 where $\lfloor {D+1\over 2}\rfloor$  denotes the integral part of ${D+1 \over 2}$, and $Re (a)$ denotes the real part of $a\in \C$.
\end{proposition}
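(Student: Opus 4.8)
The plan is to reduce both parts to a statement about algebraic numbers and their conjugates. The key point is that $b^i = e^{2\pi\sqrt{-1}/p_i}$ is a primitive $p_i$-th root of unity, so it is a root of the $p_i$-th cyclotomic polynomial $\Phi_{p_i}(X) = 1 + X + \cdots + X^{p_i - 1}$, which is irreducible over $\Q$ of degree $p_i - 1$. Hence $[\Q(b^i):\Q] = p_i - 1$. Since $a_i^i \ne 0$ and $\tilde b^i = a_i^i b^i + (\text{lower terms in } b^1,\dots,b^{i-1})$, the field $\Q(\tilde b^1,\dots,\tilde b^i)$ contains, by induction on $i$, the same data as $\Q(b^1,\dots,b^i)$: from $\tilde b^1 = a_1^1 b^1$ we recover $b^1$, and inductively $\tilde b^i$ together with $b^1,\dots,b^{i-1}$ recovers $b^i$. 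So $\Q(\tilde b^1,\dots,\tilde b^m) = \Q(b^1,\dots,b^m)$.

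For part 1, suppose toward a contradiction that $g(\tilde b^1,\dots,\tilde b^m) = 0$ for some nonzero $g \in Pol^D(\Q^m)$. I would argue by descending induction on the number of variables actually appearing, peeling off one root of unity at a time. Fix the largest index $m$; view $g(\tilde b^1,\dots,\tilde b^{m-1}, X_m)$ as a polynomial in $X_m$ over the field $\Q(b^1,\dots,b^{m-1})$, of degree at most $D$ in $X_m$. Since the distinct primes are chosen with $p_m \ge D+3$, the degree $[\Q(b^1,\dots,b^{m-1})(b^m) : \Q(b^1,\dots,b^{m-1})]$ equals $p_m - 1 \ge D+2$ (the minimal polynomial of $b^m$ stays $\Phi_{p_m}$ over the larger field because $\Q(b^1,\dots,b^{m-1})$ is contained in the compositum of cyclotomic fields of degrees coprime to $p_m$, so the extensions are linearly disjoint). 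Because $\tilde b^m = a_m^m b^m + c$ with $c \in \Q(b^1,\dots,b^{m-1})$ and $a_m^m \ne 0$, the element $\tilde b^m$ also has degree $p_m - 1 > D$ over $\Q(b^1,\dots,b^{m-1})$. A nonzero polynomial of degree $\le D$ cannot vanish at an element of degree $> D$; therefore $g$ must vanish identically as a polynomial in $X_m$, i.e. every coefficient (a polynomial in $\Q[X_1,\dots,X_{m-1}]$ of degree $\le D$) vanishes at $(\tilde b^1,\dots,\tilde b^{m-1})$. Since $g \ne 0$, some coefficient is a nonzero polynomial in $m-1$ variables, and the induction hypothesis (with the first $m-1$ primes, which still satisfy $p_i \ge D+3$) gives a contradiction; the base case $m=1$ is exactly $g(\tilde b^1) \ne 0$ for $g \ne 0$ of degree $\le D$, which holds because $\tilde b^1 = a_1^1 b^1$ has degree $p_1 - 1 > D$.

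For part 2, write $Re(\tilde b^i) = \tfrac12(\tilde b^i + \overline{\tilde b^i}) = \sum_{j=1}^i a_j^i \cdot \tfrac12(b^j + \overline{b^j}) = \sum_{j=1}^i a_j^i Re(b^j)$, so $Re(\tilde b^i)$ lies in the real cyclotomic field $\Q(b^i)^+ = \Q(\cos(2\pi/p_i))$, which has degree $(p_i-1)/2$ over $\Q$. Repeating the argument of part 1 with these real fields: the hypothesis $p_i \ge 2D+3$ gives $(p_i-1)/2 \ge D+1 > D' := \lfloor (D+1)/2 \rfloor$ — wait, more carefully, we need $Re(\tilde b^m)$ to have degree exceeding $\lfloor (D+1)/2\rfloor$ over the field generated by the previous real parts, and $(p_m - 1)/2 \ge (2D+3-1)/2 = D+1 > \lfloor (D+1)/2\rfloor$ does the job, again using linear disjointness of the real cyclotomic fields for distinct primes and that $a_m^m \ne 0$ preserves the degree. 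Then the same descending induction shows a nonzero $g \in Pol^{\lfloor (D+1)/2\rfloor}(\Q^m)$ cannot vanish at $(Re(\tilde b^1),\dots,Re(\tilde b^m))$.

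The main obstacle is the linear disjointness bookkeeping: one must verify that adjoining $b^m$ (resp. $Re(b^m)$) to the field generated by the earlier roots of unity does not drop the degree below $p_m - 1$ (resp. $(p_m-1)/2$). This follows from the standard fact that $\Q(\zeta_{p_1})\cdots\Q(\zeta_{p_{m-1}})$ and $\Q(\zeta_{p_m})$ are linearly disjoint over $\Q$ when the $p_i$ are distinct primes (their discriminants are coprime), together with passing to fixed fields under complex conjugation for the real case. Once that is in hand, everything else is the elementary observation that a nonzero univariate polynomial of degree $d$ has no root that is algebraic of degree $> d$, fed into an induction on the number of variables.
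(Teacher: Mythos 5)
Your proof is correct and follows essentially the same route as the paper: induction on the number of variables, the observation that the triangular change of coordinates $\tilde b^i = \sum_{j\le i} a_j^i b^j$ with $a_i^i\neq 0$ gives $\Q(\tilde b^1,\dots,\tilde b^k)=\Q(b^1,\dots,b^k)$, and the key cyclotomic linear-disjointness fact that $[\Q(b^1,\dots,b^m):\Q(b^1,\dots,b^{m-1})]=p_m-1$ (the paper invokes \cite[Theorem 1.12, p.~266]{Lang2002} for Galois extensions; your coprime-discriminant phrasing is the same fact). One small slip to fix: in part 2 you wrote the hypothesis as $p_i\ge 2D+3$, which is the bound appearing in Proposition~\ref{glok}.2 but not in Proposition~\ref{modi}, whose hypothesis is $p_i\ge D+3$ for both parts. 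Your argument in fact goes through under the weaker hypothesis: since $p_i$ is an odd prime, $p_i\ge D+3$ gives $p_i-1\ge D+2$ (and $\ge D+3$ if $D$ is odd), so $(p_i-1)/2\ge \lfloor (D+1)/2\rfloor +1 > \lfloor (D+1)/2\rfloor$, which is exactly what the paper's Lemma~\ref{galoisr} records. So only the quoted constant, not the argument, needs adjusting.
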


\begin{proof} 1.  Let us  prove  Proposition \ref{modi}.1 by induction on $m$.   For $m = 1$ this is trivial, since  $[\Q(\tilde b^1) : \Q] = p_1-1 \ge D+1$. 

Now suppose that
the statement is true when the number of variables of  a polynomial $g$ is strictly less than $m$. Assume  that
the statement is not true for $m$, i.e. there exists $g\in Pol ^D (\Q ^m)\subset Pol^D (\C^m)$ such that
\begin{equation}
g(\tilde b^1, \cdots, \tilde b ^m ) = 0.\label{oppos}
\end{equation}

Let us write
$$g (Y_1, \cdots, Y_m) = \sum_{i = 0} ^d g_i(Y_1, \cdots, Y_{m-1}) Y^{d-i}_m,$$
where $g_i \in \Q[Y_1, \cdots, Y_{m-1}]$, since $g$ is defined over $\Q$.
If $g$ does not depend on $Y_m$, or equivalently $g_i = 0$ for $i \in[0, d-1]$, the induction assumption implies that
the induction statement  is also valid for $m$, since  $g =g_d \in Pol ^D (\Q ^{m-1})\subset Pol ^D (\C ^{m-1})$ satisfies 
$$g(\tilde b^1, \cdots, \tilde b ^m ) \not= 0.$$

Thus,   we can assume that  $Y_m$ enters in $g$. Hence
$$g (\tilde b ^1, \cdots , \tilde b ^{m-1}) (x) \not = 0 \in \Q(\tilde b^1, \cdots, \tilde b ^{m-1})[x].$$
Clearly, (\ref{oppos}) implies that  $\tilde b ^m$ is a root of  a non-zero polynomial in  one variable of degree $D$ over
the extension $\Q(\tilde b^1, \cdots, \tilde b ^{m-1})$.
Thus
\begin{equation}
[\Q(\tilde b ^1, \cdots, \tilde b ^m): \Q(\tilde b ^1, \cdots, \tilde b ^{ m -1})] \le D. \label{oppo2a}
\end{equation}
Since $\tilde b ^i  =\sum_{j = 1} ^i a_j^ib ^j$, where $ a_j^i \in \Q$ and $a ^i _i \not = 0$, we have
 $$\Q(\tilde b ^1, \cdots, \tilde b ^k)  = \Q (b^1, \cdots, b^k)\text { for all } k \le m.$$
Thus (\ref{oppo2a}) implies that
\begin{equation}
[\Q( b ^1, \cdots,  b ^m): \Q( b ^1, \cdots,  b^{ m -1})] \le D. \label{oppo2}
\end{equation}  
Since $\Q(b ^m)$ is  a Galois  extension of $\Q$, applying \cite[Theorem 1.12 p. 266]{Lang2002}  we obtain   
\begin{equation}
[\Q( b ^1, \cdots,  b ^m): \Q( b ^1, \cdots,  b^{ m -1})] = [\Q(b ^m):\Q] = p_m -1 \ge D +1. \nonumber
\end{equation} 
Thus, (\ref{oppo2}) does not hold. The contradiction  implies that   Proposition \ref{modi}.1 is also valid for $m$. This completes the proof of Proposition \ref{modi}.1.

2. Now let us prove Proposition \ref{modi}.2.   Repeating the argument in the proof of Proposition \ref{modi}.1 we  derive Proposition \ref{modi}.2 from the following 

\begin{lemma}\label{galoisr} For $ 1\le i \le m$, $\Q(Re (\tilde b^i))$ is a Galois extension of $\Q$  and
$[\Q(Re (\tilde b ^i)): \Q ] \ge \lfloor {D+1 \over 2}\rfloor$.
\end{lemma}

\begin{proof} Since $\Q (Re (\tilde b ^i))$  is a subfield of  the Galois extension $\Q (b^i)$, whose Galois group is cyclic, $\Q(Re (\tilde b ^i))$ is also a Galois extension. 
Note that the Galois group $G_{\Q (Re (\tilde b ^i))}$ of $\Q (Re (\tilde b ^i))$  is  $\Z_{p_i -1} / \Z_2$.  Hence
$$[Q(Re (\tilde b ^i)): \Q ] \ge  \# (G_{\Q (Re (\tilde b ^i))})   = {p_i -1\over 2}\ge  \lfloor {D + 1\over 2}\rfloor.$$
This proves  Lemma \ref{galoisr}.
\end{proof}
This completes the proof of Proposition \ref{modi}.
\end{proof}

\begin{remark}\label{polq} 1. One of the main  ideas   of the Kumar-Lokam-Patankar-Sarma method,  adapted and developed  to our case,  is to relate the separable degree  of the  field extension  $\Q(\alpha_1, \cdots, \alpha_n)$, where $\alpha_i $ are algebraic numbers, with  the complexity  of polynomials and polynomial mappings whose  monomial coefficients are $\alpha_i$. This  idea  has been  invented   before by  Strassen-Schnorr  and  Heintz-Sieveking. We refer the reader to \cite[Chapter 9]{BCS1997} for  exposition  of their methods. Their  technique  is used to construct   polynomials   $P_n$ in one variable  of degree $n$ of  multiplicity  complexity   with lower bound of type $n^ a$,  $a<1$,  where  the coefficients of $P_n$ are algebraic  numbers. Our technique is used, in particular, to construct   ($poly(n)$-definable) multivariate polynomial mappings and polynomials of constant  degree, whose   (constant-depth) circuit size  is high (Propositions \ref{c45}, \ref{super}, Corollaries \ref{baust}, \ref{cor:circs}).

2. Let $f : \F ^n  \to \F^m$ be a mapping. The question whether $f$ is a polynomial mapping defined over $\Q$   depends on the choice of  a basis $(V_1, \cdots, V_n)$ of $\F^n$ as well as on the choice
of  a basis $(W_1, \cdots, W_m)$ of $\F^m$.  Assume that $f: \F ^n \to \F^m$ is a polynomial mapping defined over $\Q$ with respect to a basis  $(V_1, \cdots, V_n)$ of $\F^n$ and a basis $(W_1, \cdots, W_m)$ of $\F^m$.
Then $f$ is also a polynomial mapping defined over $\Q$  with respect a basis $(V_1', \cdots, V_n')$ of $\F^n$ and a basis $(W_1', \cdots, W_m')$ of $\F^m$, if $V_i' = \sum_jA_{ij} V_j$, \, $W _i'= \sum B_{i'l} W_l$ and
$A_{ij}, B_{i'l}$ are rational numbers.  In other words, the basis $(V_i')$ (resp. $(W_j')$)  is obtained from the basis  $(V_i)$ (resp. $(W_j)$) by a linear transformation over $\Q$.

3. The  set of all transformations $(a^ i_j)\in Mat _n (\Q)$ with $a^i_j = 0$ if $j > i$ and $ a^i _i \not = 0$, which  enter in Proposition \ref{glok},  forms the   solvable group $B_n (\Q)$. 
\end{remark}

\section{Examples and applications}

In this section, using the methods developed in the previous sections,  we construct concrete examples of $(s,r)$-elusive functions (Proposition \ref{pola}, \ref{super}). 
As a result,   we construct a sequence  of 
$poly(n)$-definable polynomial mappings $P_n : \F^{2n} \to \F^n$ of  constant degree $5r+1$ whose depth-$r$ circuit size  is greater  than $n^2/(50 r^2)$, and  consequently,  a sequence of multivariate $poly(n)$-definable polynomials of constant degree $5r+2$ whose  depth-$\lfloor r/3 \rfloor$ circuit  size is  greater than $n^2/250 r^2$ (Proposition \ref{c45}, Corollary \ref{baust}). We compare this result  with   similar  results (Remark \ref{r2}).  We  also  construct  a sequence  of  elusive polynomial mappings, whose  monomial  coefficients  are algebraic numbers, which  give   polynomials  with algebraic number coefficients such that   their circuit  size  is very large (Corollary \ref{cor:circs}).

To apply the effective  elimination theory to  elusive functions, we need to estimate   the degree  of the evaluation mapping.

 \begin{lemma}\label{doveq}  The evaluation map $Ev^k_{r,s,m}$, defined in (\ref{evkrsm}), is of total degree
$r +1$,  it is also defined over $\Q$. 
\end{lemma}

\begin{proof}
Let us compute the degree of the evaluation map $Ev ^k_{r,s,m}$.  Let $\{ V_j, \, 1\le j \le s\}$ be a basis of $\F ^{s}$. 
Let $\{( X_1 ^{i_1}\cdots X_s ^{i_s})|\, \sum_{j=1}^s i_j \le r\}$ be the basis consisting of  monomials in $Pol ^r (\F ^s)$.  Let $f= (f^1, \cdots , f^m) \in Pol ^r ( \F^s, \F ^m)$  where
$$f ^l : =  \sum_{0\le i_1+ \cdots + i_s\le r} a_{i_1 \cdots i_s, l}(X_1 ^{i_1}\cdots X_s ^{i_s}). $$ 
Let   $b=  (b_1 , \cdots, b_k) \in  (\F ^s) ^k$ where
$$ b_i = \sum_j b_i ^j V_j\in \F ^{s}.$$
Then
\begin{equation}
Ev^k_{r,s,m} (f, b) = (f (\sum_{j=1} ^sb_1^j V_j), \cdots , f (\sum _{j=1}^s b_k^j V_j))\in (\F ^m ) ^k. \label{p141}\\
\end{equation}
Clearly $Ev^k_{r,s,m}$ is a polynomial mapping, whose degree does not depend on $k$ or on $m$.  Note that for $k = 1$ and $m =1$ we have
\begin{equation}
Ev^1_{r,s,1} (f, b) =\sum_{0\le i_1+ \cdots +i_s\le r}  a_{i_1 \cdots i_s} (b_1 ^{1})^{i_1}\cdots (b_1^{s}) ^{i_s}\in \F.\label{evq}
\end{equation}
(\ref{evq}) implies  that $Ev ^1_{r,s,1}$ is of degree 1 on $f$  and of maximal degree $r$  on $b$. This proves the second assertion of Lemma \ref{doveq}.
\end{proof}

Next,  we need    a choice  of a  basis of the space
$Pol^r(\F^n)$ which is not monomial.

\begin{definition} A polynomial $(X- i)(X-i +1) \cdots X\in \F [X]$ is called {\it a pseudo-monomial}, if  $i \in \N$. A constant is also called  a pseudo-monomial. A  polynomial
$f \in \F [ X_1, \cdots, X_n]$ is called {\it a pseudo-monomial}, if  $f = f ^1\cdots f^n$, where,  for $1 \le i \le n$,  $f^i\in \F [X_i]$ and $f ^i$ is a pseudo-monomial.  
\end{definition}

\begin{remark}\label{pseu1} 1.  According to the  lexicographical ordering in $Pol ^p (\F ^n)$  the  linear transformation 
$Pol ^p (\F ^ n) \to Pol  ^ p (\F ^n)$ sending the basis consisting of pseudo-monomials to the standard basis of monomials is  an element of  the solvable group $B_{\binom{n+p}{p}} (\Q)$. In particular,  any polynomial $f \in Pol ^ p (\F ^ n)$  can  be written  in a unique way as a linear combination  of pseudo-monomials.

2. The notion of pseudo-monomials is motivated by  the interpolation formulas (\ref{4.1.1}), (\ref{4.1.1.a}), (\ref{tsr}), (\ref{combi4}), (\ref{combi4b}) for  polynomial mappings.
Using these formulas we have defined the coefficients  $\lambda ^i _{i_1, \cdots i_n}$ of the pseudo-monomials $(X_1- i_1)(X_1-i_1 +1) \cdots X_1 (X_2-i_2)\cdots X_2\cdots(X_n-i_n) \cdots X_n$ in the component
$f ^i$ of a polynomial mapping $f : \F ^n \to \F ^m$ as a rational linear combination  of the coordinates of the given points $b_{i_1\cdots i_m} \in \F ^m$.
\end{remark}

Next, we need the following

\begin{lemma}\label{kcondit} Assume that  $1\le s < m$. Then there exists a $(s,r)$-elusive  $K$-tuple in $\F^m$, if 
\begin{equation}
 K\ge \frac{m \binom{s+r}{s}+1}{m-s}.\label{ksrm}
 \end{equation}
\end{lemma}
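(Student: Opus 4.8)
The plan is to produce the required $(s,r)$-elusive $K$-tuple by a dimension count, using the evaluation map $Ev^K_{r,s,m}$ of Lemma \ref{ev} together with Lemma \ref{doveq}.2. Recall from Lemma \ref{ev} that a $K$-tuple $S_K\subset\F^m$ fails to be $(s,r)$-elusive precisely when the associated point $\overline{S_K}\in(\F^m)^K=\F^{mK}$ lies in the image of
$$Ev^K_{r,s,m}:Pol^r(\F^s,\F^m)\times(\F^s)^K\to\F^{mK}.$$
So it suffices to exhibit a point of $\F^{mK}$ not in this image, and for that it is enough to show that the image is a proper subset of $\F^{mK}$, i.e.\ that the source has dimension strictly less than $mK$ (and then invoke that the image of a polynomial mapping between affine spaces, when the domain has smaller dimension, cannot be dense — its Zariski closure is a proper subvariety, so its complement is nonempty and in fact large).

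The key computation is the dimension of the domain. By (\ref{combi1}) we have $\dim Pol^r(\F^s)=\binom{s+r}{s}$, hence $\dim Pol^r(\F^s,\F^m)=m\binom{s+r}{s}$, and $\dim(\F^s)^K=sK$. Therefore the domain of $Ev^K_{r,s,m}$ has dimension $m\binom{s+r}{s}+sK$. We want this to be $<mK$, i.e.
$$m\binom{s+r}{s}+sK<mK\iff m\binom{s+r}{s}<(m-s)K\iff K>\frac{m\binom{s+r}{s}}{m-s},$$
using $m-s\ge 1>0$. Since $K$ is an integer, $K\ge\frac{m\binom{s+r}{s}+1}{m-s}$ forces $(m-s)K\ge m\binom{s+r}{s}+1>m\binom{s+r}{s}$, which is exactly the strict inequality needed. (Note the statement writes $\binom{s+r}{s}$; this equals $\binom{s+r}{r}$, so there is no discrepancy with the $b(s,r)$ notation.)

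To finish, I would argue that when $\dim(\text{domain})<mK$ the image of $Ev^K_{r,s,m}$ is not all of $\F^{mK}$: the image is constructible (Chevalley) of dimension at most that of the domain, hence its Zariski closure is a proper closed subvariety of $\F^{mK}$, so any point of $\F^{mK}$ outside it — and such points exist since $\F$ is infinite — gives a $K$-tuple $S_K$ with $\overline{S_K}$ not in the image, which by Lemma \ref{ev} is $(s,r)$-elusive. I expect the only genuinely delicate point to be this last image-dimension step: one must be careful that ``fewer parameters'' really does imply ``not surjective,'' which over an algebraically closed field is standard (a dominant morphism does not decrease dimension), and over $\R$ one can either pass to $\C$ first and then note rational/real points are Zariski dense, or invoke that a polynomial map from $\R^N$ to $\R^{mK}$ with $N<mK$ has image of measure zero. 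Everything else is the bookkeeping of the dimension identities above.
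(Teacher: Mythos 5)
Your proof matches the paper's argument exactly: both compute $\dim\bigl(Pol^r(\F^s,\F^m)\times(\F^s)^K\bigr)=m\binom{s+r}{s}+sK$, observe that (\ref{ksrm}) forces this to be strictly less than $mK$, conclude that the image of $Ev^K_{r,s,m}$ is a proper subset, and then invoke Lemma \ref{ev}. The only difference is that you spell out the ``lower-dimensional source cannot surject'' step (Chevalley over $\C$, measure-zero over $\R$), which the paper leaves implicit.
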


\begin{proof} Note that
$$ \dim  (Pol^r(\F ^s, \F ^m) \times (\F ^s)^K)  = m\binom{s+r}{s} + sK. $$
It follows that  the  image of the  evaluation map  $Ev^K_{s,r, m}$ is a  proper  subset of co-dimension at least 1 in  $\F ^{m K}$ if $K$ satisfies (\ref{ksrm}).
Taking into account Lemma \ref{ev},  we obtain immediately Lemma \ref{kcondit}.
\end{proof} 

Using the interpolation formula in Proposition \ref{Lemma 4.4} we shall construct from 
$(s,r)$-elusive $K$-tuples  in $\F^m$  $(s,r)$-elusive  polynomial mappings $f : \F ^n \to \F^m$.
Given $K$ satisfying (\ref{ksrm}), let us assume that   two  positive integers $n, p$ satisfy the following conditions
\begin{equation}
\binom{n+p}{n} \ge  K \ge \frac{m \binom{s+r}{s} + 1 }{m -s}.
\label{elu1}
\end{equation}
By Proposition \ref{Lemma 4.4}, the first inequality in (\ref{elu1}) is a sufficient condition for the existence  of a polynomial mapping $f \in Pol ^p (\F ^n, \F ^m)$ such that the image  $f (\F^n)$ contains a given $K$-tuple in $\F^m$.

\begin{proposition}\label{pola}  Assume that $n, p$ satisfy (\ref{elu1}) with $K = \binom{n+p}{n}$.  Let $\Bb$  be   either  the monomial basis or the pseudo-monomial basis  of  the space $Pol^p(\R^n) \subset Pol^p(\C^n)$.  

1. Assume that $f^1, \cdots , f^m$ are polynomials in $Pol ^p (\C ^n)$ such that the   coefficients of each $f^j$  w.r.t. the basis $\Bb$, according to the lexicographical ordering, and beginning with the smallest term,  are 
$$ e ^{ {2\pi\sqrt{-1}\over p_1 ^j}}, \cdots , e ^{ {2\pi\sqrt{-1}\over p_K^j }} $$
 where $\{ p_i  ^j,\, 1\le i\le K,  \, 1\le j \le m\}$ are distinct 
prime numbers such that $p_i^j \ge D(m,r) +2$. Then the polynomial mapping $f = (f ^1 , \cdots, f^m): \C ^ n \to \C^m$ is  $(s,r)$-elusive.

2. Assume that $f^1, \cdots , f^m$ are polynomials in $Pol ^p (\R ^n)$ such that the  coefficients of each $f^j$ w.r.t. the basis $\Bb$, according to the lexicographical ordering, and beginning with the smallest term,  are 
$$Re( e ^{ {2\pi\sqrt{-1}\over p_1 ^j}}), \cdots , Re ( e ^{ {2\pi\sqrt{-1}\over p_K^j }}) ,$$
 where $\{ p_i  ^j,\, 1\le i\le K,  \, 1\le j \le m\}$ are distinct 
prime numbers such that $p_i^j \ge2 D(m,r) +3$. Then the polynomial mapping $f = (f ^1 , \cdots, f^m): \R ^ n \to \R^m$ is  $(s,r)$-elusive.  
\end{proposition}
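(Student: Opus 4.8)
The plan is to reduce Proposition \ref{pola} to Proposition \ref{glok} by means of the interpolation machinery of Section 3. First I would observe that by Corollary \ref{rske} and Lemma \ref{ev}, a polynomial mapping $f\in Pol^p(\F^n,\F^m)$ is $(s,r,K)$-elusive (and hence $(s,r)$-elusive) as soon as the $K$-tuple $S_K=(f(i_1,\cdots,i_n)\mid i_j\in\N,\ \sum_j i_j\le p)$ of values of $f$ on the lattice points of the simplex, viewed as a point $\overline{S_K}\in(\F^m)^K=\F^{mK}$, does not lie in the image of the evaluation map $Ev^K_{s,r,m}:Pol^r(\F^s,\F^m)\times(\F^s)^K\to(\F^m)^K$. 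Since $K=\binom{n+p}{n}$ here, this is exactly the statement that $f$ is strongly $(s,r)$-elusive (Definition \ref{strel}, Remark \ref{elstr}.1). So the whole problem is to exhibit a $\overline{S_K}$, with coordinates of the prescribed arithmetic-number shape, that avoids $\mathrm{image}(Ev^K_{s,r,m})$, and then to identify the value-tuple of the interpolating polynomial mapping $f$ with $\overline{S_K}$ via Proposition \ref{Lemma 4.4}.

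Next I would apply the effective elimination package. By Lemma \ref{doveq}.2 the map $g:=Ev^K_{s,r,m}$ is a polynomial mapping defined over $\Q$ of total degree $r+1$, from $\F^{N}$ with $N=m\binom{s+r}{s}+sK$ to $\F^{M}$ with $M=mK$. The source dimension being strictly less than the target dimension (this is exactly the inequality (\ref{ksrm})/(\ref{elu1}) with $K=\binom{n+p}{n}$), the image is a proper subvariety, so $\ker g^*\neq 0$, and by Remark \ref{imp} every nonzero element of $\ker g^*$ may be taken of degree at most $D(mK,r+1)$ — wait, more carefully, $D(M,\deg g)=D(mK,r+1)$. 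Hence to prove $\overline{S_K}\notin g(\F^N)$ it suffices to produce $\overline{S_K}\in\F^{M}$ such that $h(\overline{S_K})\neq 0$ for every $h\in Pol^{D(mK,r+1)}(\F^M)$ defined over $\Q$. This is precisely the hypothesis of Proposition \ref{modi}.1 (for $\F=\C$) and Proposition \ref{modi}.2 (for $\F=\R$), provided the coordinates of $\overline{S_K}$ are of the form $\tilde b^i=\sum_{j\le i}a^i_j b^j$ with $b^j=e^{2\pi\sqrt{-1}/p_j}$ and the primes $p_j$ large enough — at least $D(mK,r+1)+3$ in the complex case, at least $2D(mK,r+1)+3$ in the real case. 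Here one must simply match the degree bound $D(mK,r+1)$ used in the estimates with the bound $D(m,r)$ appearing in the statement of the proposition; since $mK\ge m$ and $r+1\ge r$ while $D$ is monotone, the hypothesis $p^j_i\ge D(m,r)+2$ written in the statement is actually \emph{weaker} than what is needed, so strictly speaking the proposition as stated should read $D(mK,r+1)$ rather than $D(m,r)$ — I would flag this and carry the argument with the correct bound, which is what Remark \ref{imp} forces.

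Then comes the bookkeeping that turns "a point in $\F^{mK}$" into "a polynomial mapping $f$ whose coefficients are the prescribed roots of unity." By Proposition \ref{Lemma 4.4} (equivalently the linear isomorphism $I^{b(s,r)}_m$ of Corollary \ref{inthom}.2, here with the roles of the integers adjusted: $s\leadsto n$, $r\leadsto p$, $m\leadsto m$) there is an algorithmically constructed $f=f_{S_{n,p,m}}\in Pol^p(\F^n,\F^m)$ with $f(i_1,\cdots,i_n)=b_{i_1,\cdots,i_n}$ for all lattice points of the $p$-simplex, where the $b_{i_1,\cdots,i_n}$ are the $K=\binom{n+p}{n}$ chosen points of $\F^m$. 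The formulas (\ref{4.1.1})–(\ref{combi4b}) show that this interpolation is a $\Q$-linear change of basis: the monomial coefficients (equivalently the pseudo-monomial coefficients, Remark \ref{pseu1}) of $f$ are $\Q$-linear combinations of the coordinates $b^\ell_{i_1,\cdots,i_n}$, and the transition matrix lies in the solvable group $B_{m\binom{n+p}{p}}(\Q)$. So choosing the values $b^\ell_{i_1,\cdots,i_n}$ appropriately forces the $\ell$-th component $f^\ell$ to have monomial (or pseudo-monomial) coefficients $e^{2\pi\sqrt{-1}/p^\ell_1},\cdots,e^{2\pi\sqrt{-1}/p^\ell_K}$ in lexicographic order (and $Re(\cdot)$ of these in the real case), with the $p^\ell_i$ pairwise distinct primes — this freedom is exactly what lets us arrange the $\tilde b^i$-shape required by Proposition \ref{modi}, since a change of ordered basis within $B_\bullet(\Q)$ exactly transports the upper-triangular-$\Q$-combination form. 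Finally, having arranged $\overline{S_K}\notin g(\F^N)$ via Proposition \ref{modi}, Lemma \ref{ev} gives that $S_K$ is $(s,r)$-elusive, hence by Corollary \ref{rske} that $f$ is $(s,r,K)$-elusive, hence $(s,r)$-elusive; Remark \ref{elstr}.2 then also records that $f$ is strongly $(s,r)$-elusive. The real case is identical, replacing Proposition \ref{modi}.1 by Proposition \ref{modi}.2 and doubling the prime bound.

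The main obstacle, and the only real subtlety, is the degree accounting: one has to be careful that the polynomial $h\in\ker g^*$ witnessing $\overline{S_K}\notin g(\F^N)$ has degree bounded by the quantity $D(\cdot,\cdot)$ that actually controls the number-theoretic estimate in Proposition \ref{modi}, i.e. $D(mK,r+1)$ with $mK=m\binom{n+p}{n}$, not $D(m,r)$; and correspondingly that the primes chosen in the statement are large enough relative to \emph{that} bound. Everything else — the interpolation, the identification of coefficients, the descent through $B_\bullet(\Q)$ — is linear-algebraic routine already set up in Sections 3 and 4. So the proof is essentially: "apply Lemma \ref{ev} + Corollary \ref{rske}, apply Lemma \ref{doveq}.2 + Remark \ref{imp} to bound $\deg\ker g^*$, apply Proposition \ref{glok}/\ref{modi} to place $\overline{S_K}$ off the image, and invoke Proposition \ref{Lemma 4.4} to realize $\overline{S_K}$ as the value-tuple of an $f$ with the advertised coefficients," with the degree bound $D(m\binom{n+p}{n},r+1)$ substituted for $D(m,r)$ throughout.
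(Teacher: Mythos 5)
Your proposal follows essentially the same route as the paper's own proof: reduce to showing that the value-tuple $\overline{S_K}$ of $f$ on the lattice points of the $p$-simplex avoids the image of $Ev^K_{s,r,m}$, invoke Lemma~\ref{doveq} to see this map is defined over $\Q$ of degree $r+1$, apply Proposition~\ref{glok}/Proposition~\ref{modi} via the $B_\bullet(\Q)$-transport of Remarks~\ref{polq} and~\ref{pseu1}, and identify the prescribed root-of-unity coefficients of $f$ with the coordinates of $\overline{S_K}$ through the interpolation machinery of Proposition~\ref{Lemma 4.4}. The only cosmetic difference is the direction: you fix $\overline{S_K}$ off the image and then interpolate back to $f$, whereas the paper starts from the coefficients of $f$ and shows the resulting $\overline{S_K}$ inherits the upper-triangular-over-$\Q$ shape; these are inverse changes of basis inside $B_{m\binom{n+p}{p}}(\Q)$, so the two presentations are equivalent.

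Your flag about the degree constant is a correct and worthwhile catch, and the paper's proof glosses over it. Applying Proposition~\ref{glok} to $g=Ev^K_{s,r,m}\colon\F^{m\binom{s+r}{s}+sK}\to\F^{mK}$, whose degree is $r+1$ and whose target dimension is $mK$, requires primes at least $D(mK,r+1)+2$ in the complex case and $2D(mK,r+1)+3$ in the real case, not $D(m,r)+2$ or $2D(m,r)+3$ as stated; since $D$ is increasing in both arguments and $mK>m$, $r+1>r$, the stated bound is strictly weaker than what Remark~\ref{imp} forces, so the constant in the proposition should be replaced by $D(mK,r+1)$. (There is also some inconsistency in the paper about whether the first argument of $D$ tracks the target dimension, the source dimension, or the total number of indeterminates in the elimination ideal, but in every reading it is substantially larger than $m$.) This is a defect in the statement rather than in your argument, and carrying the corrected bound through, as you do, is the right move.
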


\begin{proof} It suffices to show that  the  polynomial mappings $f$  defined in Proposition \ref{pola} are strongly $(s, r)$-elusive.   Equivalently, we need to show that  the set
$$S_K : = \{ f(i_1, \cdots, i_n)| \,  i_j\in \N \text { and  } \sum_{j =1 } ^n i_j \le p \}\subset \F ^m, $$
$\F = \C$ or $\F = \R$, is a $(s, r)$-elusive $K$-tuple. 
We will show that the associated point $\overline{S_K}\in (\F ^m) ^K$  does not belong to the image   of the evaluation map 
$Ev ^K_{r,s,m}$. 
By  Lemma \ref{doveq}  the evaluation map $Ev ^K_{r,s,m}$ is   a polynomial  mapping of  degree $(r+1)$, moreover it is defined over $\Q$. Remarks \ref{polq} and \ref{pseu1}.2  imply that
 Lemma \ref{doveq} also holds  with respect to the basis  of $(\F ^{m})^K = (\F ^K) ^m = Pol ^p (\F ^n, \F ^m)$ that is induced  from the  basis of pseudo-monomials in $Pol ^p (\F ^n)$.
 Now we will apply Proposition \ref{glok}  to show  that $\overline{S_K}$ does not belong  to the image of $Ev ^k_{r,s,m}$; more precisely, we will verify  that the coordinates  of $\overline{S_K}$ with respect to the  pseudo-monomial basis
 in  $(\F ^K) ^m = Pol ^p (\F ^n, \F ^m)$ satisfy the conditions of Proposition \ref{glok}.  Using Remarks \ref{polq}.2 and \ref{pseu1}.1 it suffices to
 consider the case of $f\in Pol ^p (\F ^n, \F ^m)$ whose  pseudo-monomial coefficients are given   by the recipe in Proposition \ref{pola}.

 By the assumption of Proposition \ref{pola} the first $m$ coordinates of  $\overline{S_K} \in (\F ^K) ^m = Pol ^p
  (\F ^n, \F ^m)$ are the smallest  pseudo-monomials according to the  lexico-graphical ordering, i.e., they are field elements. These field elements  are    numbers 
 $$e ^{ {2\pi\sqrt{-1}\over p_1 ^1}} \cdots,  e ^{{2\pi\sqrt{-1}\over p_1 ^m}},$$
 if $\F = \C$. (The case $\F = \R  $ is  similar).
 Now assume  that  the conditions of Proposition \ref{glok} hold for the  first $lm$-coordinates of  $\overline{S_K} \in (\F ^K) ^m = Pol ^r (\F ^n, \F ^m)$, for $l \ge 1$.
 The interpolation formula 
 (\ref{combi4}) for the $(l+1)j$ coordinate $(S_K)_{l+1} ^j$ of $\overline {S_K}$, $ 1\le j \le m$, if $\F = \C$,  has the following form
$$(\overline{S_K})^{l+1}_j = a ^{l+1}_j   e ^{ {2\pi\sqrt{-1}\over p_{l+1} ^j}} +\sum_{1 \le k \le l}  a ^{ l +1,k}_j e ^{ {2\pi\sqrt{-1}\over p_{k} ^j}}, $$
where $a ^{l+1, k}_j \in \Q$  and $a ^{l+1}_j\not = 0$. (The case $\F = \R  $ is  similar).
Thus the conditions in Proposition \ref{glok} also hold for first $(l+1)m$-coordinates of  $\overline{S_K} \in (\F ^K) ^m = Pol ^r (\F ^n, \F ^m)$. 
 This completes the proof of Proposition \ref{pola}.
\end{proof}

\medskip

In \cite[\S 3.4]{Raz2009}  Raz proposed  a method for  constructing  polynomials
of large complexity  using  $(s,r)$-elusive functions.   Propositions \ref{r311}, \ref{c45} below are   sample applications of Raz's method. 

 Given  a  tuple of $n ^2$ function $ f_{ij}\in \F[X_1, \cdots, X_n]$,  $1\le i, j \le n$,
 we define  an $n$-tuple  of polynomials $\tilde f_i \in \F[X_1\cdots , X_n, Z_1, \dots, Z_n]$,
$i \in [1, n]$, as follows (cf. \cite[\S 3.3]{Raz2009})
\begin{equation}
\tilde f_i (X_1, \cdots, X_n, Z_1, \cdots, Z_n): =\sum_{j =1}^nf_{ji}(X_1, \cdots, X_n)Z_j\label{raze1}
\end{equation}

\begin{proposition} \label{r311}\cite[Proposition 3.11]{Raz2009} Let $n, r\le s$ be integers.
Let $f : \F ^ n \to \F ^{n^2}$ be a polynomial mapping. If $f$ is  $(s,r)$-elusive, then any depth-r arithmetic circuit
over $\F$ for the $n$-tuple  $\{ \tilde f_i: \F ^ {2n} \to \F, \, i \in [1, n]\}$ of  polynomials  defined by (\ref{raze1}) is of size
greater than $s$.
\end{proposition}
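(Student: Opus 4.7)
The plan is to argue by contraposition. Suppose there exists a depth-$r$ arithmetic circuit $\Phi$ over $\F$ of size at most $s$ computing the $n$-tuple $(\tilde f_1, \ldots, \tilde f_n) : \F^{2n} \to \F^n$. I will construct from $\Phi$ a polynomial mapping $\Gamma : \F^{s} \to \F^{n^2}$ of degree at most $r$ such that $f(\F^n) \subset \Gamma(\F^{s})$, contradicting the $(s,r)$-elusiveness of $f$.

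The key observation is that $\tilde f_i(X, Z) = \sum_{j=1}^n f_{ji}(X) Z_j$ is linear in $Z$, so $f_{ji}(a) = (\partial \tilde f_i / \partial Z_j)|_{(X,Z) = (a,0)}$ for every $a \in \F^n$. Since $\Phi$ has at most $s$ edges, it has at most $s$ internal (sum or product) gates; label the internal gates $v_1, \ldots, v_{s'}$ with $s' \le s$, and for each $a \in \F^n$ set $y(a) := (v_k|_{(X,Z) = (a,0)})_{k=1}^{s'} \in \F^{s'} \subseteq \F^s$ (padding by zeros), the tuple of values of the internal gates at $Z = 0$. For a formal parameter $y \in \F^{s}$ and each index $j$, I compute the $Z_j$-Jacobian $J_j$ at every gate of $\Phi$ by the rules: zero at $X$- and constant-input gates; equal to $1$ at the $Z_j$-input and $0$ at the other $Z_k$-inputs; propagating linearly through sum gates with the fixed edge labels $\alpha_e \in \F$; and through product gates $v = u_1 u_2$ by the Leibniz rule $J_j(v) = J_j(u_1) \cdot y_{u_2} + y_{u_1} \cdot J_j(u_2)$, where $y_{u_i}$ stands in for the value $u_i|_{Z=0}$. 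Reading off $J_j$ at the $i$-th output gate then yields a polynomial $\Gamma_{ji}(y)$, and $\Gamma := (\Gamma_{ji})_{1 \le j, i \le n}$ satisfies $\Gamma(y(a)) = f(a)$ by the chain rule, so $f(\F^n) \subset \Gamma(\F^s)$.

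The main obstacle is verifying the degree bound $\deg \Gamma \le r$. The polynomial $\Gamma_{ji}(y)$ is a sum over directed paths in $G = G_\Phi$ from the $Z_j$-input gate to the $i$-th output gate, and along each such path the contribution is a product of factors that are either fixed edge labels $\alpha_e \in \F$ (constants, degree $0$ in $y$) or parameters $y_{u_{\mathrm{other}}}$ attached to the ``other'' child at a product gate on the path (degree $1$ in $y$). Since every directed path in $G$ has length at most $r$, each monomial in $\Gamma_{ji}(y)$ has degree at most $r$, hence $\deg \Gamma \le r$. Formalizing this requires a careful gate-by-gate induction that distinguishes the $Z=0$ values (which become the free parameters $y$) from the $Z_j$-Jacobians (which become the $y$-linear factors of each path-monomial), in the style of the Baur-Strassen argument and of Raz's proof of Proposition 3.11 in \cite{Raz2009}, whose explicit bookkeeping I would invoke to conclude.
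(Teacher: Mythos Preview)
The paper does not give its own proof of this proposition; it simply imports the statement from \cite[Proposition 3.11]{Raz2009}. Your sketch is essentially Raz's argument: freeze $X=a$, $Z=0$, treat the resulting gate values as free parameters $y$, and recover the matrix $(f_{ji}(a))_{i,j}$ as the symbolic $Z$-Jacobian of the outputs computed by a single bottom-up pass through the circuit. The path-counting bound $\deg\Gamma\le r$ is correct: each monomial of $\Gamma_{ji}(y)$ corresponds to a directed path from the $Z_j$-input to the $i$-th output, picking up one factor $y_{u_{\text{other}}}$ at every product gate on the path, and the path has length at most the depth $r$.

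One bookkeeping point needs attention. Your parameter vector $y(a)$ records only the values of the \emph{internal} (sum/product) gates at $(a,0)$, yet in the Leibniz step $J_j(v)=J_j(u_1)\,y_{u_2}+y_{u_1}\,J_j(u_2)$ you also need $y_{u}$ when the child $u$ is an $X_k$-input, in which case $u|_{(a,0)}=a_k$. As written, $\Gamma$ then depends on $a$ directly rather than only through $y$, so $\Gamma:\F^s\to\F^{n^2}$ is not well-defined. The clean fix is to let $y$ have one coordinate for every \emph{non-output} gate (input gates included); since each non-output gate has out-degree at least $1$ and the size $s$ counts edges, there are at most $s$ such gates, so $y\in\F^s$ as required. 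Your degree argument is unaffected by this change. With this correction the proof is complete and matches the argument in \cite{Raz2009}.
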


Using Proposition \ref{r311} and  our construction of $(s,r)$-elusive functions in Proposition \ref{pola},
we  shall construct  sequences of polynomials with large constant-depth circuit size.
 
\begin{proposition}\label{c45} Let $\F = \C$ or $\F = \R$, and $ 1\le r\in \N$ a constant.  There are infinitely many  sequences of $poly(n)$-definable  polynomial
mappings $\tilde f_{n,r}\in Pol ^{5r+1}(\F ^ {2n},\F ^n)$, which satisfy  the following properties. All the coefficients of $\tilde f_{n,r}$ are algebraic numbers, and any  (unbounded fanin) depth-$r$ arithmetic circuit over $\F$ for $\tilde f_{r,n}$ is of size greater than $ {n ^2\over 50 r ^ 2}$. 
\end{proposition}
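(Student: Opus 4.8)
The plan is to combine Proposition~\ref{super} with Proposition~\ref{r311} and make the numerology match the bound $n^2/(50r^2)$. First I would set $r' = r+1$ (so that $2r'-1 = 2r+1$ and the degree $p = (r'-1)(2r'-1) = r(2r+1)$), and apply Proposition~\ref{super} with this choice: for each $n$ in the appropriate arithmetic progression (those $n$ divisible by $r'-1 = r$ and large enough that $(n+r-4)^4 \ge r!$, which holds for all sufficiently large $n$) we obtain a poly$(n)$-definable polynomial mapping $g_n \in Pol^{p}(\F^n, \F^{m(n)})$ which is $(s(n), 2r'-1)$-elusive, where $s(n) = \lfloor n/((r'-1)r')\rfloor^{r'-3}$ and $m(n) = n\binom{n-1+r'}{r'}$. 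The coefficients of $g_n$ are the real or complex algebraic numbers built from roots of unity as in Proposition~\ref{pola}, so they are algebraic.

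Next I would reconcile the target dimensions with Proposition~\ref{r311}, which wants a mapping $f : \F^n \to \F^{n^2}$. The mild discrepancy is that $m(n) = n\binom{n-1+r'}{r'}$ is not literally $n^2$; I would handle this by passing to a number of variables $N$ chosen so that $N^2 \ge m(N')$ for a suitable $N' \le N$, i.e.\ by relabelling: replace $n$ by a value $\nu$ for which $m(\nu) \le \nu'^2$ with $\nu' = \nu \cdot O(1)$, padding the remaining $\nu'^2 - m(\nu)$ coordinate functions by zero (which can only preserve elusiveness since the image of the padded map is contained in a coordinate subspace and an $(s,r)$-elusive set stays elusive under the inclusion $\F^{m} \hookrightarrow \F^{m'}$, by Definition~\ref{pelu}). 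Alternatively, one may simply run Proposition~\ref{r311} in the form where the codomain has dimension $m$ rather than exactly $n^2$ — the conclusion is unchanged — and only at the end express the size lower bound in terms of the number of variables. Either way, with $f = g_n : \F^n \to \F^{m}$ being $(s(n), 2r'-1)$-elusive, Proposition~\ref{r311} gives that every depth-$(2r'-1)$ arithmetic circuit for the associated $n$-tuple $\tilde f_{n,r}$ defined by (\ref{raze1}) has size greater than $s(n)$; note $\tilde f_{n,r} \in Pol^{p+1}(\F^{2n}, \F^n)$ and $p+1 = r(2r+1)+1 = 2r^2 + r + 1$. To land exactly on degree $5r+1$ one adjusts the choice of $r'$ and re-indexes: taking a linear-in-$r$ value of $r'$ and of the elusiveness degree makes the degree of $\tilde f$ a fixed linear function of $r$; I would pick the normalization so that this function equals $5r+1$, which amounts to choosing $r'$ and the target depth appropriately and absorbing the depth parameter (a depth-$r$ circuit being in particular a depth-$(2r'-1)$ circuit once $2r'-1 \ge r$).

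The last step is the quantitative estimate $s(n) > n^2/(50r^2)$. Here $s(n) = \lfloor n/(r'(r'-1))\rfloor^{r'-3}$, so for $r' \ge 5$ (i.e.\ $r \ge 4$) the exponent $r'-3 \ge 2$ and $s(n)$ grows at least like $(n/r'^2)^2 = n^2/r'^4$, which for $n$ large dominates $n^2/(50r^2)$; for the small values $r = 1,2,3$ one treats the cases by hand or by slightly enlarging the degree, which is harmless since the statement only asks for degree $5r+1$ and "infinitely many sequences". The poly$(n)$-definability is inherited from Proposition~\ref{super} (each coefficient is an explicitly prescribed algebraic number and $p$ is constant in $n$), and passing from $g_n$ to $\tilde f_{n,r}$ via (\ref{raze1}) only multiplies by the linear forms $Z_j$, preserving poly$(n)$-definability and algebraicity of coefficients.

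The main obstacle I anticipate is purely bookkeeping: making the three parameter constraints — the degree equal to $5r+1$, the depth equal to $r$, and the size bound $n^2/(50r^2)$ — simultaneously consistent with the inequalities (\ref{elu1}) and (\ref{super1}) that Proposition~\ref{super} needs, across the full range $r \ge 1$. The elusiveness machinery (Propositions~\ref{pola}, \ref{super}, \ref{r311}) does all the real work; the risk is only an off-by-a-constant slip in the exponents, which is why I would carry the computation with $r' = r+1$ symbolically and check the boundary cases $r \in \{1,2,3\}$ separately rather than trusting the asymptotic estimate there.
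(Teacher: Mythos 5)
Your proposal has a genuine gap at the step where you try to reconcile the degree with $5r+1$. Proposition~\ref{super} forces the degree $p = (r'-1)(2r'-1)$, which is \emph{quadratic} in $r'$. Your claim that ``taking a linear-in-$r$ value of $r'$\dots\ makes the degree of $\tilde f$ a fixed linear function of $r$'' is therefore false: with $r'$ linear in $r$ the degree of $\tilde f_{n,r}$ is quadratic in $r$, and there is no re-indexing that gives $5r+1$ for all $r$. Indeed with your chosen $r' = r+1$ you get degree $2r^2+r+2$, which exceeds $5r+1$ already at $r = 3$. This is not bookkeeping; the route through Proposition~\ref{super} cannot produce the stated degree bound. (The dimension-padding idea $\F^{m} \hookrightarrow \F^{m'}$ is fine — elusiveness is preserved under inclusion into a larger codomain — so that part would survive if the degree issue were fixable, but it is not fixable inside Proposition~\ref{super}.)

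The paper bypasses Proposition~\ref{super} altogether and applies Proposition~\ref{pola} directly, with a parameterization tailored to the target numerology: take $n' \ge r^2$, set
$n = 5n'r$, $p = 5r$, $m = (n')^2$, $s = \lfloor (n')^2/2 \rfloor$,
verify that $(n,p,m,s,r)$ satisfy condition~(\ref{elu1}) (which is a short binomial estimate), invoke Proposition~\ref{pola} to obtain an $(s,r)$-elusive $f'_{n,r} \in Pol^{5r}(\F^n, \F^{(n')^2})$, embed $\F^{(n')^2} \hookrightarrow \F^{n^2}$ to obtain the $(s,r)$-elusive $f_{n,r}: \F^n \to \F^{n^2}$ required by Proposition~\ref{r311}, and then (\ref{raze1}) yields $\tilde f_{n,r} \in Pol^{5r+1}(\F^{2n}, \F^n)$ with a depth-$r$ size lower bound of $s = \lfloor (n')^2/2\rfloor = \lfloor (n/(5r))^2/2 \rfloor > n^2/(50r^2)$. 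The degree $5r+1$, the depth $r$, and the constant $50$ all come out simultaneously and on the nose because $p$ is \emph{chosen} to be $5r$, not derived from the combinatorial constraints in Proposition~\ref{super}. If you replace your use of Proposition~\ref{super} by a direct verification of~(\ref{elu1}) for these concrete parameters, the rest of your outline (algebraicity of coefficients via Proposition~\ref{pola}, poly$(n)$-definability since $p$ is constant in $n$, passage to $\tilde f_{n,r}$ by~(\ref{raze1}) and Proposition~\ref{r311}) goes through.
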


\begin{proof} Let $n ' \ge r^2$ be an integer, and set
$$  n: = 5n'r,\,  p: = 5r,\,  m: = (n') ^ 2 ,\,  s := \lfloor (n') ^2/2\rfloor.$$
First we  will show  that  the chosen  values  $(n,p, m, s)$   satisfy   Condition  (\ref{elu1}).  Since $(m-s)\ge m/2$  it suffices to  show
\begin{equation}
\binom{5n'r + 5r}{5r} \ge 2\frac  {(n') ^ 2\binom{\lfloor{(n') ^2\over 2}\rfloor + r}{r} + 1}{(n') ^2}.
\label{eq1}
\end{equation}
Clearly (\ref{eq1}) is a consequence  of Lemma \ref{lem:inn1}, which  we  now prove.
\begin{lemma}\label{lem:inn1} We have
\begin{equation}
\binom{5n'r + 5r}{5r} \ge 2 [\binom{\lfloor{(n') ^2\over 2}\rfloor + r}{r} + 1].\label{eq2}
\end{equation}
\end{lemma}
\begin{proof} 
We rewrite the  LHS of (\ref{eq2})  as
\begin{equation}
\Pi _{k = 0} ^{r-1} \frac{(5n'r + 5k +1)(5n'r + 5k +2)\cdots (5n'r +5k+5)}{(5k +1)(5k +2) \cdots (5k +5)}, \label{eq2a}
\end{equation}
and RHS of (\ref{eq2}) as
\begin{equation}
 2(\Pi_{k =1} ^r  \frac{\lfloor{(n') ^2\over 2}\rfloor +k}{k} +1).\label{eq2b}
 \end{equation}
Using (\ref{eq2a}) and  (\ref{eq2b}),     taking into account the   following inequalities
$$ 2(\Pi_{k =1} ^r  \frac{\lfloor{(n') ^2\over 2}\rfloor +k}{k} +1)\le \Pi_{k =1} ^r  (\frac{(n') ^2 +2k}{k} +2)\le ((n') ^2 +4)^r, $$ 
$$ \frac{(5n'r + 5k +1)(5n'r + 5k +2)\cdots (5n'r +5k+5)}{(5k +1)(5k +2) \cdots (5k +5)}\ge (\frac{5n'r + 5k +5}{5k+5})^5$$
$$\ge (\frac{(n'+1) r}{r})^5 \text{ (since $k +1 \le  r$)},$$
 to prove  Lemma \ref{lem:inn1} it suffices  to establish the following inequality
\begin{equation}
(n'+1)^5 \ge  (n')^2 + 4.\label{eq:inn2}
\end{equation}
Clearly (\ref{eq:inn2}) holds, since  $n'  \ge 1$.
This completes the proof of Lemma \ref{lem:inn1}.
\end{proof}
 Since (\ref{eq1}) is  fulfilled,  Proposition \ref{pola} implies that   there exists a $(s,r)$-elusive function $f_{n,r}'\in Pol ^{p}(\F ^n , \F ^m)$.
 
   We extend $f_{n,r}' $ to a polynomial mapping,  denoted by $ f_{n,r}$, from $\F^{ n }$ to $\F ^{n^2} $ by composing $f_{n,r}$ with the canonical 
   embedding $\F^{(n') ^ 2} \to \F ^ { n ^ 2}$. Clearly $f_{n,r}$ is also  $(s,r)$-elusive.
Since $r$ is fixed and all the coefficients of $f_{n,r}$  are given,  $f_{n,r}$ is $poly(n$)-definable.
Let $\tilde f_{n, r} : \F ^{2n} \to \F^n$  be the polynomial mapping   obtained from $f_{n,r}: \F^n \to \F^{n^2}$  by recipe (\ref{raze1}). 
Set 
$$\tilde f_{n, r}: = ((\tilde f_{n,r})^1, \cdots, (\tilde f_{n,r})^n),$$
where $(\tilde f_{n,r})^i$, $i \in [1, n]$, is the $i$-th coordinate of the   polynomial mapping $\tilde f_{n,r}$.
Since  $f_{n,r}\in Pol^{5r}(\F^n, \F^{n^2})$, we  have $\tilde f_{n,r} \in Pol ^{5r+1}(\F ^{2n}, \F ^ n)$.  Furthermore, $(\tilde f_{n,r})^i, 1\le i \le n,$ is $poly(n)$-definable, since $r$  is fixed.
Taking into account Proposition \ref{r311} this completes the proof of Proposition
\ref{c45}.
\end{proof}

\begin{corollary}\label{baust} Let $\tilde  f_{n,r} : = ((\tilde f_{n,r})^1, \cdots, (\tilde f_{n,r})^n) \in Pol ^{5r+1}(\F ^{2n}, \F^n)$ be the polynomial mappings  defined in Proposition  \ref{c45}.
Let $\hat f_{n, r} : \F ^{2n}\times  \F ^n  \to \F$ be defined by 
$$ \hat f_{n, r} (X_1,\cdots, X_{n},Z_1, \cdots, Z_n, Y_1, \cdots, Y_n) : = \sum _{ i =1} ^n (\tilde f _{n,r})_i (X_1, \cdots, Z_n) Y_i. $$
Then any depth-$\lfloor r/3 \rfloor$ arithmetic circuit  for $\hat f _{n,r}$ is of size greater than ${n ^2 \over 250 r^2}$.
\end{corollary}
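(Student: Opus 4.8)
Corollary \ref{baust} asserts a circuit-size lower bound for the single polynomial $\hat f_{n,r}$ obtained from $\tilde f_{n,r}$ by taking a linear combination with new variables $Y_1,\dots,Y_n$. The plan is to reduce the statement to Proposition \ref{c45} by an elementary circuit-manipulation argument: a small circuit for $\hat f_{n,r}$ of small depth would yield a circuit for the tuple $\tilde f_{n,r} = ((\tilde f_{n,r})_1,\dots,(\tilde f_{n,r})_n)$ of comparable size and comparable depth, contradicting the bound ${n^2}/{50 r^2}$ already established in Proposition \ref{c45}.

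Concretely, first I would recall that $\hat f_{n,r} = \sum_{i=1}^n (\tilde f_{n,r})_i \cdot Y_i$ is linear in the variables $Y_1,\dots,Y_n$, so each component $(\tilde f_{n,r})_i$ is recoverable as the partial derivative $\partial \hat f_{n,r}/\partial Y_i$, equivalently as the coefficient of $Y_i$. The standard interpolation/homogenization technique then applies: given a depth-$\lfloor d/3\rfloor$ circuit $\Psi$ of size $\sigma$ computing $\hat f_{n,r}$, one can extract, for each $i$, a circuit computing $(\tilde f_{n,r})_i$ by substituting values for $Y_1,\dots,Y_n$ (e.g. setting $Y_i = 1$ and $Y_j = 0$ for $j \ne i$, after first homogenizing in the $Y$-variables to isolate the degree-one-in-$Y$ part); this costs at most a constant factor in size and at most a constant additive or multiplicative factor in depth. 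Running this for all $n$ output coordinates and merging the resulting circuits into one circuit with $n$ outputs gives an arithmetic circuit for $\tilde f_{n,r} \in Pol^{5r+1}(\F^{2n},\F^n)$ of size $O(\sigma)$ and depth at most $3\lfloor d/3\rfloor \le d$, where $d$ is whatever depth parameter makes the depth-$r$ hypothesis of Proposition \ref{c45} applicable.

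I would then track the constants explicitly. Proposition \ref{c45} gives that any depth-$r$ circuit for $\tilde f_{n,r}$ has size greater than $n^2/(50 r^2)$; the reduction loses a factor that I expect to be bounded by $5$ (accounting for: the homogenization/extraction overhead, the merge of the $n$ coordinate circuits, and the passage from depth $\lfloor d/3\rfloor$ back to depth $\le d = r$), which yields the claimed bound $n^2/(250 r^2)$. The factor $3$ in the depth bound $\lfloor d/3\rfloor$ is exactly what is needed so that the extraction of each $(\tilde f_{n,r})_i$ — which itself raises depth by a bounded amount — still lands within depth $r$ for the hypothesis of Proposition \ref{c45}. So the argument is: \emph{assume} a depth-$\lfloor d/3\rfloor$ circuit of size $\le n^2/(250 r^2)$ for $\hat f_{n,r}$; build the depth-$\le r$ circuit of size $\le n^2/(50 r^2)$ for $\tilde f_{n,r}$; contradict Proposition \ref{c45}.

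The main obstacle is bookkeeping rather than conceptual: making the depth-versus-extraction trade-off precise and checking that the constant genuinely comes out to $250$ (and not something larger). In particular one must verify that homogenizing a depth-$k$ circuit in a fixed number of variables and reading off the linear part can be done within a depth blow-up compatible with the factor $3$, and that forming a single $n$-output circuit from the $n$ extracted circuits does not blow up the size by more than the claimed constant. Since the $Y$-variables enter $\hat f_{n,r}$ only linearly and $\tilde f_{n,r}$ has bounded degree $5r+1$, all these steps are standard and the constants are generous enough; I would simply cite the relevant closure properties of bounded-depth arithmetic circuits (homogenization, partial substitution, output-merging) and assemble the inequality chain.
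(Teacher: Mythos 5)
Your outline correctly identifies that $(\tilde f_{n,r})_i = \partial \hat f_{n,r}/\partial Y_i$, and this is indeed the starting point. But the step where you ``merge'' the $n$ extracted coordinate circuits into a single $n$-output circuit of size $O(\sigma)$ does not work: if you obtain $(\tilde f_{n,r})_i$ by substituting $Y_i = 1$, $Y_j = 0$ ($j \ne i$) into a circuit of size $\sigma$, you get $n$ \emph{distinct} circuits, each of size roughly $\sigma$, and their disjoint union has size on the order of $n\sigma$, not $O(\sigma)$. That factor of $n$ destroys the bound: you would only conclude $\sigma > n/(50 r^2)$ up to constants, which is far weaker than the claimed $n^2/(250 r^2)$. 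Nothing in the ``standard closure properties'' you invoke lets $n$ unrelated substitution instances share work.

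The paper's proof is precisely the black-box result you are missing, namely the Baur--Strassen theorem (cf. \cite{BS1983} and Raz's Corollary~4.6 in \cite{Raz2009}): if a single polynomial is computed by an arithmetic circuit of size $s$ and depth $d$, then \emph{all} of its first-order partial derivatives can be computed by \emph{one} circuit of size $5s$ and depth $3d$. Applying this to a depth-$\lfloor d/3\rfloor$, size-$s$ circuit for $\hat f_{n,r}$ yields a single circuit of size $5s$ and depth $3\lfloor d/3 \rfloor \le d$ computing the $n$-tuple $\tilde f_{n,r}$ (as the $Y_i$-derivatives), and then Proposition~\ref{c45} forces $5s > n^2/(50 r^2)$, i.e.\ $s > n^2/(250 r^2)$. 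This is where both the factor $5$ and the factor $3$ in the depth reduction come from; they are not generic slack constants but the sharp constants of Baur--Strassen. Without invoking that theorem (or re-proving it), the inequality chain in your write-up does not close.
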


\begin{proof} We use  Raz'  argument in \cite [Corollary 4.6]{Raz2009}.  Baur and Strassen proved that if a polynomial $\hat f$ can be computed by an arithmetic
circuit of size $s$ and depth $d$, then all partial derivatives of that polynomial can be computed
by one arithmetic circuit of size $5s$ and depth $3d$.
\end{proof}

\begin{remark}\label{r2}  In \cite[Lemma 4.1]{Raz2009} Raz proposed a combinatoric method  to construct a $([n ^ {1 + 1/(2r)}], r)$-elusive function of degree
$ 5 r$ from $\F ^ {5nr}$ to $\F ^{n ^ 2}$, if  $n$ is  prime  and $1 \le r \le (\log _2 n)/100$. As a result, Raz obtained a lower bound $n ^{ 1 + 1 / (2r)}$ for  the size of any depth-$r$ arithmetic  circuit computing  $\tilde f_n\in Pol ^{5r+1}(\F^{n(5r+1)},\F ^n)$ \cite[Corollary 4.5]{Raz2009}  and  a lower bound  $n ^{ 1 + 1 / (2r)}/5$  for  any depth-$\lfloor r/3 \rfloor $ arithmetic circuit computing  $ \hat f_n\in Pol ^{5r+1}(\F^{n(5r+2)})$ \cite[Corollary 4.6]{Raz2009}.  Note that his polynomials $\tilde f_i$ have coefficients taking  values in $\{0, 1\}$.  Raz's  results is  an improvement of Shoup's and Smolensky's result \cite{S-S1991}, which 
gives a lower bound of $\Om(dn^{1+1/d})$ for depth $d$ arithmetic circuits, for  explicit polynomials
of degree $O(n)$ over $\C$.   Shoup and Smolensky  used  algebraic independent numbers  and a sequence  of  rapidly growing integers of the form $2, 2^n, \cdots, 2 ^{n ^ {n-1}}$ to construct such polynomials. We also like to mention  better lower bounds for depth four homogeneous circuits, see  e.g. \cite{FLMS2013},  but  these constant  deep  circuits  have    lower  bound on the  fanin  at the bottom layer  of   product gates (and ours   do  not have  such a bound).
\end{remark}

Raz     also generalized   his  construction of polynomials  of large  circuit size in Proposition \ref{r311}  as follows \cite[\S 3.1, 3.3]{Raz2009}.  
We fix $m'$ to be the number of monomials of total degree exactly $r$ in $n$ variables, that is, $m' =\binom{n+r-1}{r}$
and we fix $m = m' \cdot n$. Let $M$ be the set of all monomials of total degree exactly $r$ in the variables $\{z_1, \cdots,  z_n\}$.  Let $h : M \to [1, m']$
be the lexicographic order of monomials. Let us denote by $Pol^p_{hom} (\F^n, \F^m)$ the space of homogeneous polynomial mappings  of degree $p$  from $\F^n$ to $\F^m$.
Given  a  homogeneous polynomial  mapping $f = (f_{1,1}, \cdots, f_{m',n}) \in Pol^p_{hom}(\F^n, \F^m) = (Pol ^p_{hom} [x_1, \cdots, x_n])^ m$  we define an $n$-tuple of  polynomials $\tilde f_1, \cdots, \tilde f_n \in \F[x_1, \cdots, x_n, z_1, \cdots, z_n]$ as follows (cf. (\ref{raze1}))
\begin{eqnarray}
\tilde f_i (x_1, \cdots, x_n, z_1, \cdots z_n): = \sum_{g\in M}f_{h(g), i}(x_1, \cdots, x_n) \cdot g = \nonumber \\
=\sum_{j =1} ^ {m'} f_{j, i}(x_1, \cdots, x_n)h^{-1}(j).\label{raze2}
\end{eqnarray}
Now we define a polynomial $\tilde  f \in \F[x_1, \cdots, x_n, z_1, \cdots, z_n, w_1, \cdots, w_n]$  using (\ref{raze2}) and the following   formula (cf. the formula in Corollary \ref{baust})
\begin{equation}
\tilde f= \sum_{i=1}^n  w_i \cdot \tilde f_i. \label{raz3} 
\end{equation}

\begin{lemma}\label{lem:raz38} (\cite[Corollary 3.8]{Raz2009}) Let $1\le r \le n \le s$, and $ m = n \cdot \binom{n+r-1}{r}$ be  integers.
Let $f\in Pol^p( \F^n , \F^m)$ be a polynomial mapping.  If $f$ is $(s, 2r-1)$-elusive,  then any arithmetic circuit for the polynomial $\tilde f: \F^{3n} \to \F$  
constructed by recipe (\ref{raz3}) is of size $\ge \Om(\sqrt{s}/r^4)$.
\end{lemma}

In  \cite{Raz2009} Raz did not specify  the  value $\Om(\sqrt{s}/r^4)$  but it  is not   hard to   find  that value  using  Raz's results  in \cite{Raz2009}.  In \cite{Le2013}  we developed Raz's method, in particular  we  specified  the  lower bound  for  the circuit  size   of $\tilde f$, see e.g. \cite[Proposition 4.3]{Le2013}  for a slightly generalized  assertion.

Now  we   shall apply  Lemma \ref{lem:raz38}   and  our  methods    to  construct  polynomials   with  algebraic  number  coefficients  with  large circuit  size.  First  we need the following

\begin{proposition}\label{super}  Given $4\le r'\in \N$,   for $  n \in \N$ set 
$$ s(n): = (\lfloor{n\over (r'-1) r'}\rfloor) ^{r'-3}, \, m(n): = n \cdot \binom{n-1 +r'}{r'}, \, p = (r'-1) (2r'-1).$$
Then, for $\F = \C$ or $\F = \R$, if  $n \ge 2 (r'-1) r'$ and $ (n + r -4) ^4 \ge r ! $ there exists a   polynomial mapping $f \in Pol ^p( \F ^n , \F ^{m(n)})$ such that $f$ is $(s(n), 2r'-1)$-elusive, moreover  the monomial coefficients of
$f$ are algebraic numbers.
\end{proposition}

\begin{proof} Set $r : = 2r'-1$. 
  We will show that  $(n, p, m= m(n), s, r)$ defined in Proposition \ref{super} satisfy (\ref{elu1}) for $K : = \binom{n+p}{p}$, i.e., we need to verify that
\begin{equation}
\binom{n+p}{n} \ge \frac{m\binom{s+r}{s} +1}{m -s}.\label{super1}
\end{equation}
Since  $ (n + r -4) ^4 \ge r ! $ we get
\begin{equation}
(r')!\cdot  n  ^{r'-4}\le (n+r -4) ^ 4 \cdot n ^{r'-4} < n \cdot (n +1) \cdots   \cdots (n + r' -1).\label{eq:super1a}
\end{equation}
Since  $ 4\le  r'$ and $ n \ge 2$, taking into account (\ref{eq:super1a}),   we obtain
\begin{equation}
s(n) < n ^{r'-3} \le {n\over 2} n ^{r'-4} \le {n\over 2}\cdot \binom{n-1 + r '}{r'} \le { m +1\over 2} .\label{super2}
\end{equation}
Abbreviating $s(n)$ as $s$,   we deduce from (\ref{super2})
\begin{equation}
\frac{m\binom{s+r}{s} +1}{m -s}\le \frac{(m +1) \binom{s+r}{s}}{m-s} < \frac{(m+1)\binom{s+r}{s}}{m-{m+1 \over 2}} \le  2 (1 + {2\over m-1}) \binom{s+r}{s}.\label{super3}
\end{equation}
Clearly (\ref{super1})  follows from (\ref{super3}) and the following inequality
\begin{equation}
\binom{n+p}{n} \ge  2 (1 + {2\over m-1}) \binom{s+r}{s},\label{super31}
\end{equation}
which we now prove.  We rewrite  the LHS of (\ref{super31}) as
\begin{equation}
\Pi_{k =0} ^{r-1}\frac{(n + (r'-1)k +1)(n + (r'-1)k +2) \cdots (n + (r'-1) (k +1))}{((r'-1)k +1)((r'-1)k +2)\cdots (r'-1) (k +1)}\label{super31a}. 
\end{equation}
Since $p = (r'-1)(2r'-1) = (r' -1)  r$, we rewrite  the RHS of (\ref{super31}) as
\begin{equation}
 2 (1 + {2\over m-1}) \Pi_{k =1}^r \frac{s + k}{k}.\label{super31b}
 \end{equation}

\begin{lemma}\label{lem:ad}  For all $0\le k \le r-1$ we have
\begin{equation}
\frac{(n+1) ^{r'-1}}{(k +1) ^{r'-1}}   \ge 2\frac{ s +k +1} {k +1}.\label{eq:no}
\end{equation}
\end{lemma}

\begin{proof}  To prove  Lemma \ref{lem:ad} it suffices to     establish the following  inequality
\begin{equation}
(n+1) ^{r'-1} \ge  2\cdot r^{r' -2} \cdot (s + 2r' -1).\label{eq:ad1}
\end{equation}
Since $r \ge 7$ we have
\begin{eqnarray}
\frac{s + 2r' -1}{r} < {s\over 2}={1\over 2} (\lfloor{n\over (r'-1) r'}\rfloor) ^{r'-3} <\nonumber\\
{1\over 2} ({n\over (r'-1) r'})^{r'-1}< {1\over 2}({ n +1 \over r}) ^{r'-1}.\label{eq:ad2}
\end{eqnarray}

Clearly (\ref{eq:ad2}) implies   (\ref{eq:ad1}). This  completes  the proof  of Lemma \ref{lem:ad}.
\end{proof}

Using (\ref{super31a})  and Lemma  \ref{lem:ad} we obtain
\begin{equation}
\binom{n+p}{n} \ge \Pi_{k=0}^{r-1}\frac{ (n+1)^{r'-1}}{(k+1)^{r'-1}}\ge \Pi_{k=1}^r (2 \frac{s+k}{k}).\label{eq:ad6}
\end{equation}

Taking into account (\ref{super31b}), we obtain   (\ref{super31})  from (\ref{eq:ad6}).  This   proves (\ref{super1}). 

Since (\ref{super1})holds,   we  can apply Proposition \ref{pola} to get a $(s,r)$-elusive mapping $f \in Pol^p( \F ^n,\F ^m)$, whose monomial coefficients are algebraic numbers are $\exp (\frac{2\pi\sqrt{-1}}{p ^ i_j})$ or its real part.  

This completes the proof  of  Proposition \ref{super}.
\end{proof}
\medskip

Lemma \ref{lem:raz38}  and Proposition \ref{super}  yield immediately

\begin{corollary}\label{cor:circs} Assume that $r'$ grows  much slower  than $n$, e.g. $r' = const$  or $r' =  \ln \ln n$. Let
$p = (r' -1)(2r' -1)$.  Then there are sequences  of  polynomials $f_n\in Pol ^{p + r' +1}(\F^{3n})$,  whose coefficients  are algebraic numbers,  such that 
$$ L(f_n) \ge \Om( \frac{\lfloor{n \over r' (r' -1)} \rfloor ^{ {r' -3\over 2}}}{(r' ) ^ 4}).$$
\end{corollary}
\begin{proof}  Taking into account Lemma \ref{lem:raz38}  and Proposition \ref{super}, it suffices to   prove that  if $r' = \ln \ln n$  and  $n$ is sufficient    large, then    $(n +r -4)  ^ 4 \ge  r!$.
Clearly, $(n +r -4)  ^ 4 \ge  r!$   follows from  $r\ln r  <  \ln  n$.  Since  $ r   > \ln r $ for    sufficiently large  $r$, it suffices   to show that  $r ^2 <  \ln n $, or equivalently
$2 \ln r <  \ln \ln n$.    The  last inequality holds for   large $r$, since  $ 2 \ln r <  r  = \ln \ln n$.
\end{proof}

Note that  Corollary \ref{cor:circs}   yields  a much better  lower bound than  that in Proposition \ref{c45}, whose  assertion   we have compared  with a similar result  by Raz  and with 
that one by
Shoup  and Smolensky.  This demonstrates  the effectiveness of our methods.


\section*{Acknowledgements}   I am indebted  to  Pavel Pudlak  for his support, stimulating  helpful discussions and critical remarks. I am thankful to Gerhard Pfister for his explanation
of their results in \cite{G-P2007},  to Ran Raz for his motivating lecture in Prague \cite{Raz2009b}, and to  Sasha Sivatsky  for his helpful remarks  and to Partha Mukhopadhyay  for stimulating discussions. 
A part of this note has been written during my visit of MSRI, Berkeley, GIT, Atlanta, and    ASSMS, Government College University, Lahore-Pakistan. I thank  these institutions for their hospitality and  financial support.

\end{document}